\newtheorem*{Lem}{Lemma}
\newtheorem*{Def}{Definition}
\newtheorem*{hypoth}{Hypotheses}
\newtheorem*{Prop}{Proposition}
\newtheorem*{Cor}{Corollary}
\newtheorem*{ThmA}{Theorem A}
\newtheorem*{ThmB}{Theorem B}
\newtheorem*{Thm}{Theorem}
\numberwithin{equation}{subsection}
\theoremstyle{remark}
\newtheorem*{Rmk}{Remark}
\newcommand{\uv}{{\rm U}(V)}
\newcommand{\guv}{{\rm GU}(V)}
\newcommand{\ep}{\epsilon}
\newcommand{\vform}{\langle\mspace{7mu},\mspace{7mu}\rangle}
\newcommand{\pic}{\pi^{\vee}}
\newcommand{\fc}{f^{\vee}}
\newcommand{\thf}{f^{\theta}}
\newcommand{\pii}{\pi^{\iota}}
\newcommand{\ui}{{}^{\iota}}
\newcommand{\ut}{{}^{\theta}}
\newcommand{\cx}{\mathbb{C}}
\newcommand{\tr}{{\rm tr}}
\newcommand{\ev}{\text{End}_E(V)}
\newcommand{\av}{\text{Aut}_E(V)}
\newcommand{\lieg}{\mathfrak{u}(V)}
\newcommand{\lietg}{\mathfrak{gu}(V)}
\newcommand{\lag}{\mathfrak{g}}
\newcommand{\mtl}{\dot{\mathcal{L}}}
\newcommand{\mttl}{\ddot{\mathcal{L}}}
\newcommand{\ml}{\mathcal{L}}
\newcommand{\mlh}{\widehat{\mathcal{L}}}
\newcommand{\eo}{\mathfrak{o}_E}
\newcommand{\fo}{\mathfrak{o}_F}
\newcommand{\fp}{\mathfrak{p}_F}
\newcommand{\fl}{\mathfrak{o}_L}
\begin{document}

\thanks{2010 {\em Mathematics Subject Classification.} 22E50, 20G05.}
\thanks{}

\title[Dualizing involutions for classical and similitude groups]{Dualizing involutions for classical and similitude groups over local non-archimedean fields} 
 
\author[A. Roche]{Alan Roche}
\address{Dept. of Mathematics, University of Oklahoma, Norman OK 73019-3103.}
\email{aroche@math.ou.edu}
\author[C. R. Vinroot]{C.~Ryan Vinroot}
\thanks{Vinroot was supported in part by a grant from the Simons Foundation, Award \#280496}
\address{Dept. of Mathematics, College of William and Mary, P.O. Box 8795,  Williamsburg, VA 23187-8795.}
\email{vinroot@math.wm.edu}


\begin{abstract}
Building on ideas of Tupan, we give an elementary proof of a result of  M{\oe}glin, Vign\'{e}ras and Waldspurger on the 
existence of automorphisms of many $p$-adic classical groups that take each irreducible smooth representation to its dual.  
Our proof also applies to the corresponding similitude groups. 
\end{abstract}

\maketitle 

\section*{Introduction}

Let $F$ be a non-archimedean local field and let $G$ be the group of $F$-points of a reductive $F$-group. 
Let $\iota$ be an automorphism of $G$ of order at most two. We call $\iota$ a {\it dualizing involution} if it takes each irreducible smooth representation of $G$ to its smooth dual or contragredient. 
An early example comes from a paper of Gelfand and Kazhdan \cite{GK}. This shows, via a
geometric method, that transpose-inverse is a dualizing involution of ${\rm GL}_n(F)$. 
By adapting Gelfand and Kazhdan's approach, M{\oe}glin, Vign\'{e}ras and Waldspurger  proved the existence of dualizing involutions for
many classical $p$-adic groups \cite{MoViWa87} Chap.~IV \S~II. 

The  impetus for this paper stems from more recent work of Tupan \cite{Tupan} that   
rederives Gelfand and Kazhdan's result by entirely elementary means. We adapt Tupan's method
so that it applies to the classical groups of \cite{MoViWa87} as well as the corresponding similitude groups. 
The paper \cite{RV} contains another approach to these results via existence of characters  \cite{H-C, Ad-Kor}. 

Let $\fo$ denote the valuation ring of $F$ and fix a uniformizer $\varpi$ in $F$.  
The basis of Tupan's method has two parts: 
(i) the observation that if $\ml$ is an $\fo$-order in ${\rm M}_n(F)$ then the family $\{ 1+ \varpi^k \ml \}_{k \geq 1}$
 forms a neighborhood basis of the identity in ${\rm GL}_n(F)$ consisting of compact open subgroups and 
(ii) a classical result in linear algebra that any square matrix is conjugate to its transpose by a symmetric matrix 
(see, for example, \cite{Kap} page 76).

For the classical and similitude groups $G$ that we consider, Theorem~A of \cite{RV} provides a natural analogue of (ii). More precisely, it gives
an anti-involution $\theta$ of $G$ (i.e., $\ut (ab) = \ut b  \,\ut a$ for all $a, b \in G$ and $\theta^2 = 1$) with the following property: for any $x \in G$ there is a $g \in G$ with $\ut g = g$ such that $g xg^{-1} =  \ut x$.  The dualizing involution $\iota$ of $G$ is 
then given by $\ui g = \ut g^{-1}$ for $g \in G$. In place of (i), we use a suitable $\fo$-lattice $\ml$ in the Lie algebra $\lag$ of $G$. 
For a certain dense open subset $\lag_1$ of $\lag$, there is a Cayley map $c:\lag_1 \to G$ such that $\lag_1$ contains $\varpi \ml$. 
The family $\{ c (\varpi^k \ml) \}_{k \geq 1}$ is then a neighborhood basis of the identity in $G$ that again consists of compact open subgroups.

In this way, our setting and Tupan's fit into a common framework. After a more precise statement of results in \S1, we introduce 
this framework in \S2. It allows us to present an axiomatic version and slight simplification of Tupan's original method. 
We show  in \S4 that our family of groups fits inside the framework. 
This relies on properties of Cayley maps, in particular a Cayley map for similitude groups, that we study in \S3. 
Our use of Cayley maps means that we have to exclude the case of even residual characteristic. 

\section{Preliminaries and statement of results}
\subsection{}
As above, let $F$ be a non-archimedean local field and let $G$ be the $F$-points of a reductive $F$-group. 
Via the topology on $F$, the group $G$ is naturally a locally profinite unimodular topological group.
For any irreducible smooth representation $\pi$ of $G$, we write $\pic$ for the smooth dual or contragredient of $\pi$.  

\begin{Def}
{\rm Let $\iota$ be a  continuous automorphism of $G$ of order at most two. We say that 
$\iota$ is a {\it dualizing involution}
of $G$ if $\pii \cong \pic$ for all irreducible smooth representations $\pi$ of $G$ where $\pii = \pi \circ \iota$.}
\end{Def}

\subsection{}
To introduce the family of classical and similitude groups that we work with, let 
$E/F$ be a field extension with $[E:F] \leq 2$. We assume also that the residual characteristic of $F$ is odd. In particular, 
the characteristic of $F$ cannot be even. This assumption is necessitated by our use of Cayley maps in \S\ref{Cayley maps}.

We write $\tau$ for the generator of ${\rm Gal} (E/F)$.
Thus $\tau$ has order two when $[E:F] = 2$ and  $\tau = 1$ when $E=F$. 
Let $V$ be a finite dimensional vector space over $E$ with a non-degenerate $\ep$-hermitian form $\vform$ for $\ep = \pm 1$. 
We take $\vform$ to be linear in the first variable:
\[
       \langle \alpha u +  \beta v, w \rangle =   \alpha \langle u, w  \rangle + \beta \langle  v,w \rangle \,\,\,\text{and} \,\,\,
          \langle v, w \rangle = \ep \, \tau ( \langle w, v \rangle )          
       \]
for all $\alpha, \beta \in E$ and $ u, v, w \in V$. It follows that $\vform$ is $\tau$-linear in the second variable: 
\[
       \langle u,  \alpha v +  \beta w \rangle =   \tau(\alpha)  \langle u, v  \rangle + \tau(\beta)  \langle  u , w \rangle.
       \]

Let $\uv$ denote the group of isometries of $\vform$ and $\guv$ the corresponding similitude group: 
\begin{align*} 
   \uv  &= \{ g \in {\rm Aut}_E (V) :  \langle g v, g v' \rangle = \langle v, v' \rangle, \,\,\, \forall \,\, v, v' \in V \}, \\
      \guv  &= \{ g \in {\rm Aut}_E (V) :  \langle g v, g v' \rangle =  \beta \langle v, v' \rangle, \, \text{for some scalar $\beta$}, 
 \,  \forall \,\, v, v' \in V \}.
\end{align*} 
For $g \in \guv$ with associated scalar $\beta$ we often write $\mu(g) = \beta$. This is the {\it multiplier} of $g$. 
Note $\beta \in F^\times$. Indeed, applying $\tau$ to  $ \langle g v, g v' \rangle =  \beta \langle v, v' \rangle$ 
gives $\tau(\beta) = \beta$.

\subsection{}
We recall a definition from \cite {RV}.
\begin{Def}
{\rm Let $h \in {\rm Aut}_F(V)$. 
\begin{enumerate}[(1)]
\item
We say that $h$ is {\it anti-unitary} if  
\[
           \langle h v, h v' \rangle    =      \langle   v', v \rangle, \,\, \quad \forall \,\, v, v' \in V.
           \]
\item           
We say also that $h$ is an {\it anti-unitary similitude} if, for some scalar $\beta$,            
\[
\langle h v, h v' \rangle    =  \beta     \langle   v', v \rangle, \,\, \quad \forall \,\, v, v' \in V. 
 \]    
 \end{enumerate}
 }
 \end{Def}        

We can now state the main technical result of \cite{RV}. 
In the form of the corollary below, this plays a crucial role in our adaptation of Tupan's method.
\begin{ThmA}
Let $g \in \guv$ with $\mu(g) = \beta$. Then there is an anti-unitary involution $h_1$  
and an anti-unitary similitude $h_2$ with $h_2^2 = \beta$ such that $g = h_1 h_2$.
\end{ThmA}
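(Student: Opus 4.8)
The plan is to show that the factor $h_1$ is forced, so that the whole statement reduces to the construction of $h_2$. Let $g^*$ denote the adjoint of $g$ for the form, defined by $\langle gv,v'\rangle = \langle v,g^*v'\rangle$; since $g\in\guv$ with $\mu(g)=\beta$ we have $g^*g=\beta$, i.e.\ $g^*=\beta g^{-1}$. Suppose $h_2$ is any anti-unitary similitude with $\langle h_2v,h_2v'\rangle=\beta\langle v',v\rangle$, and set $h_1:=gh_2^{-1}$. A short computation with the defining relations gives $\langle h_2^{-1}v,h_2^{-1}v'\rangle=\beta^{-1}\langle v',v\rangle$, whence $\langle h_1v,h_1v'\rangle=\beta\langle h_2^{-1}v,h_2^{-1}v'\rangle=\langle v',v\rangle$; thus $h_1$ is automatically anti-unitary. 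So the content lies entirely in the two conditions $h_2^2=\beta$ and $h_1^2=1$. Assuming $h_2^2=\beta$, the relation $h_1^2=gh_2^{-1}gh_2^{-1}=1$, i.e.\ $gh_2^{-1}g=h_2$, is equivalent to $h_2^{-1}gh_2=\beta g^{-1}=g^*$. In other words, it suffices to produce an anti-unitary similitude $h_2$ with $\langle h_2v,h_2v'\rangle=\beta\langle v',v\rangle$, with $h_2^2=\beta$, and with $gh_2=h_2g^*$.

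To build such an $h_2$ I would exploit that $g$ and $g^*=\beta g^{-1}$ commute, so that $V$ is a module over the commutative $E$-algebra $R:=E[g]\cong E[t]/(m(t))$, where $m$ is the minimal polynomial of $g$, and the adjoint restricts to a $\tau$-semilinear involution of $R$ carrying $g$ to $\beta g^{-1}$. An $h_2$ as above is then exactly a nondegenerate $\tau$-semilinear map intertwining this involution and squaring to $\beta$, so its existence becomes a question about $\ep$-hermitian forms over $(R,{}^*)$. I would decompose $V$ into the primary components of $g$ and group them into pairs $\{V_\lambda,V_{\beta/\tau(\lambda)}\}$ swapped by the involution $\lambda\mapsto\beta/\tau(\lambda)$, together with the self-dual components for which $\lambda\tau(\lambda)=\beta$. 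On a swapped pair the form identifies $V_\lambda$ with the dual of $V_{\beta/\tau(\lambda)}$, and one writes $h_2$ down as an explicit semilinear interchange of the two summands, arranging the square condition by a direct normalization. This mirrors, in the present context, the classical fact used for $\mathrm{GL}_n$ that a matrix is conjugate to its transpose by a symmetric matrix.

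The main obstacle is the self-dual components, where $V_\lambda$ is stable under the involution and the form restricts nondegenerately to it. Here one cannot simply swap two summands; instead one must produce on $V_\lambda$ an anti-unitary similitude squaring to \emph{exactly} $\beta$ -- not merely to $\beta$ times a unit -- and with the correct sign $\mu(h_2)=+\beta$ (note that $h_2^2=\beta$ only forces $\mu(h_2)=\pm\beta$ a priori). This is where the invariants of the form genuinely enter and where the cases $\ep=\pm1$ and $E=F$ versus $[E:F]=2$ must be separated. I would reduce each self-dual block to a standard anisotropic-or-hyperbolic model and invoke the classification of $\ep$-hermitian forms over the relevant local algebra to exhibit $h_2$ explicitly, checking in each case that it is anti-unitary, that $\langle h_2v,h_2v'\rangle=\beta\langle v',v\rangle$, and that $h_2^2=\beta$. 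Assembling the blockwise factors into an orthogonal direct sum and setting $h_1=gh_2^{-1}$ then yields the factorization $g=h_1h_2$.
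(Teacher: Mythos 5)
First, a structural point: this paper does not actually prove Theorem~A. It is imported as ``the main technical result of \cite{RV}'', and only the Corollary deduced from it is proved here, so your proposal has to be measured against the argument of \cite{RV}. That argument is module-theoretic in essentially the spirit you describe: following M{\oe}glin--Vign\'{e}ras--Waldspurger, one studies $V$ as a module over $E[T]$ (or $E[T,T^{-1}]$) with $T$ acting through $g$, the form induces a duality twisted by $T \mapsto \beta T^{-1}$ and $\tau$, and the analysis splits into components swapped by this duality and self-dual components. Within that frame, your first paragraph is correct and complete: $h_1 = gh_2^{-1}$ is automatically anti-unitary once $h_2$ is an anti-unitary similitude with multiplier $\beta$, and, granted $h_2^2 = \beta$, the condition $h_1^2 = 1$ is equivalent to $h_2^{-1}gh_2 = \beta g^{-1} = g^*$. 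Your observation that $h_2^2 = \beta$ by itself only forces the multiplier of $h_2$ to be $\pm\beta$ is also correct, and it identifies exactly the delicate point.

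The genuine gap is the step where you ``reduce each self-dual block to a standard model and invoke the classification of $\ep$-hermitian forms.'' That is not a step that can be executed as stated, and it is precisely where the entire difficulty of the theorem is concentrated. Classification theorems tell you when two forms are isometric; what you need is the \emph{existence}, on each self-dual block, of a $\tau$-semilinear operator with multiplier exactly $\beta$, with square exactly $\beta$ (not $\beta$ times a norm or a square --- the very sign ambiguity you flagged), and intertwining $g$ with $g^*$. No statement about isometry classes produces such an operator by itself. What closes this case in \cite{RV} (as in MVW's treatment of the isometry groups) is structural and constructive: after fixing an elementary divisor, a self-dual block is a free module over the local algebra $A = E[t]/(p(t)^e)$ with its induced involution; the restriction of $\vform$ is realized as an $A$-valued hermitian form composed with a suitable linear functional (a trace-form argument); that $A$-valued form is diagonalized; and $h_2$ is then written down explicitly on each rank-one summand and checked by direct computation to satisfy all three conditions. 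Those computations --- where $\ep = \pm 1$, $E=F$ versus $[E:F]=2$, and the self-duality of $p$ genuinely enter --- are the substance of the proof, and your outline stops just before them. Two smaller corrections in the same direction: the primary components are indexed by irreducible polynomials $p$ over $E$ (with duality $p \mapsto p^*$ induced by $t \mapsto \beta/t$ and $\tau$), not by eigenvalues $\lambda$; and on a self-dual block you are not handling a form on a vector space over a field but a form compatible with a module over $E[t]/(p^e)$, so even the reduction to a ``standard model'' requires an argument your sketch does not supply.
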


Once and for all we fix an anti-unitary involution $h \in {\rm Aut}_F(V)$ and set $\ui g = \mu(g)^{-1} hgh^{-1}$ for $g \in \guv$. 
Thus $\iota$ is a continuous automorphism of $\guv$ of order two. Further $\iota$ restricts to the automorphism  
$g \mapsto hgh^{-1}:\uv \to \uv$ 
which by obtuseness we again denote by $\iota$.

For $a \in \guv$, we set $\ut a = \ui a^{-1}$, so that $\theta$ (resp.~$\theta \, |_{ \, \uv})$ is an involutary anti-isomorphism of $\guv$ (resp.~$\uv$).  

\begin{Cor}
For each $a \in \guv$, there is an $x \in \uv$ with ${}^{\theta}x = x$ such that $xax^{-1} = {}^{\theta}a$.
\end{Cor}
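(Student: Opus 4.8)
The plan is to deduce the corollary from Theorem~A by a short algebraic manipulation, the whole point being that an anti-unitary involution lets us convert the anti-unitary factors supplied by Theorem~A into honest elements of $\guv$. Given $a \in \guv$ with $\mu(a) = \beta$, Theorem~A produces an anti-unitary involution $h_1$ and an anti-unitary similitude $h_2$ with $h_2^2 = \beta$ such that $a = h_1 h_2$. Before anything else I would record two elementary facts about products of anti-unitary maps. First, a product of two $\tau$-antilinear maps is $E$-linear, and feeding the defining identity $\langle h v, h v'\rangle = \langle v', v\rangle$ twice shows $\langle h h_1 v, h h_1 v'\rangle = \langle h_1 v', h_1 v\rangle = \langle v, v'\rangle$, so in fact $h h_1 \in \uv$ (both similitude factors are $1$, so the product genuinely preserves the form, not merely up to scalar). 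Second, comparing $\mu(a) = \beta$ with the similitude factor of $h_2$ forces that factor to equal $\beta$ as well; this is what guarantees that $a$ and $\ut a$ share the multiplier $\beta$, which is a necessary condition for conjugating one to the other by an element of $\uv$.

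Next I set $x = h h_1$ and verify the two required properties. That $\ut x = x$ is a one-line computation: since $x \in \uv$ we have $\mu(x) = 1$, so $\ut x = h x^{-1} h^{-1} = h (h_1 h) h = h h_1 = x$, using $h^2 = 1$ and $h_1^2 = 1$ (here $x^{-1} = h_1 h$ because $h$ and $h_1$ are involutions).

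For the conjugation the key device is to insert $h^2 = 1$ into the factorization and regroup. On one side, $a = h_1 h_2 = (h_1 h)(h h_2) = x^{-1}(h h_2)$. On the other side, from $\ut a = \mu(a)\, h a^{-1} h^{-1} = \beta\, h a^{-1} h$ together with $a^{-1} = h_2^{-1} h_1 = \beta^{-1} h_2 h_1$ (using $h_2^{-1} = \beta^{-1} h_2$), I get $\ut a = h h_2 h_1 h = (h h_2)(h_1 h) = (h h_2)\, x^{-1}$. Thus $a$ and $\ut a$ are exactly the two cyclic orderings of the pair $x^{-1}$ and $h h_2$, and the conjugation is immediate: $x a x^{-1} = x\, x^{-1}(h h_2)\, x^{-1} = (h h_2)\, x^{-1} = \ut a$. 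Together with $x \in \uv$ and $\ut x = x$, this gives the corollary.

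The one genuine idea here, rather than a real obstacle, is the regrouping $h_1 h_2 = (h_1 h)(h h_2)$, which turns each anti-unitary factor into an element of $\guv$ and displays $a$ and $\ut a$ as $x^{-1}(h h_2)$ and $(h h_2) x^{-1}$; after that the conjugator $x = h h_1$ essentially writes itself. Everything else is bookkeeping with $h^2 = 1$, $h_1^2 = 1$, and $h_2^2 = \beta$. The only points I would treat with care are the verification that $h h_1$ lands in $\uv$ rather than merely in $\guv$ (so that $x$ is an admissible conjugator preserving multipliers) and the correct tracking of the scalar $\beta$ through the formula $\ut a = \mu(a)\, h a^{-1} h^{-1}$.
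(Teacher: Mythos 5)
Your proof is correct and follows essentially the same route as the paper: both invoke Theorem~A to write $a = h_1 h_2$, take $x = h h_1$ as the conjugator, and use $h_2^{-1} = \beta^{-1} h_2$ together with $\ut a = \mu(a)\, h a^{-1} h^{-1}$ to verify $\ut x = x$ and $xax^{-1} = \ut a$. Your ``cyclic reordering'' framing ($a = x^{-1}(hh_2)$, $\ut a = (hh_2)x^{-1}$) is just a mild repackaging of the paper's direct computation $\ut a = (hh_1)\,a\,(hh_1)^{-1}$.
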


\begin{proof}
Let $a \in \guv$ and put $\mu(a) = \beta$. 
By Theorem~A, we have $a = h_1 h_2$ for an anti-unitary involution $h_1$ and an anti-unitary similitude $h_2$ such that 
$h_2^2  = \beta$. Hence  $h_2^{-1} = \beta^{-1} h_2$ and 
\begin{align*} 
    {}^{\theta}a  &= \beta \, h \, h_2^{-1} h_1^{-1} h^{-1} \\
                        &= \beta \, h \, \beta^{-1} h_2 \, h_1^{-1} h^{-1} \\
                        &=  (h h_1) \,  (h_1 h_2) \,  (hh_1)^{-1} \\
                        &=   (h h_1) \, a \,  (hh_1)^{-1}.
                        \end{align*}
 Now $h h_1 \in \uv$ and 
 \begin{align*} 
 {}^{\theta} hh_1  &= h  (hh_1)^{-1} h^{-1} \\
                            &= h h_1 h  h^{-1} \\
                            &= h h_1, 
                            \end{align*} 
and thus we can take $x = h h_1$.                        
\end{proof}
 
Our main goal is to prove the following.  

\begin{ThmB}  
The maps $\iota:\uv \to \uv$ and $\iota:\guv \to \guv$ are dualizing involutions. 
\end{ThmB}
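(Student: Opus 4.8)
The plan is to fix an irreducible smooth representation $\pi$ on a space $W$ and show $\pii\cong\pic$ by producing a single nonzero invariant bilinear form, then to make this tractable by descending to a finite-dimensional Hecke algebra where the Corollary and a filtration of $\theta$-stable compact open subgroups do the work. The starting reformulation is that a $G$-homomorphism $\pii\to\pic$ is the same datum as a bilinear form $B$ on $W$ with $B(\pi(\ui g)v,\pi(g)w)=B(v,w)$ for all $g$ and all $v,w\in W$. Since $\pii$ and $\pic$ are both irreducible, Schur's lemma guarantees that any nonzero such $B$ is automatically non-degenerate and yields the desired isomorphism; so it is enough to produce one nonzero invariant $B$, for $G=\uv$ and for $G=\guv$.

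I would then pass to finite level. Using the Cayley map $c$ of \S3 and the $\fo$-lattice $\ml\subset\lag$, put $K_k=c(\varpi^k\ml)$, so that $\{K_k\}_{k\ge 1}$ is a neighborhood basis of the identity consisting of compact open subgroups. The geometric point to verify here is that each $K_k$ is stable under $\iota$, equivalently under $\theta$, which holds once $\ml$ and $c$ are chosen $\iota$-equivariantly; this is also where the assumption of odd residual characteristic is forced, since the Cayley map is only available away from residual characteristic two. Choosing $k$ large enough that $W^{K_k}\neq 0$, the space $M:=W^{K_k}$ is a simple module over the finite-dimensional Hecke algebra $H:=\mathcal H(G,K_k)$, and the anti-involution $\sigma:f\mapsto f\circ\theta$ preserves $H$. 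Under $\pi\mapsto W^{K_k}$, passing to $\pic$ and to $\pii$ translate respectively into dualizing $M$ through $f\mapsto f(\,\cdot^{-1})$ and twisting the action by $f\mapsto f\circ\iota$; combining the two, $\pii\cong\pic$ becomes the statement that $M$ is isomorphic to its $\sigma$-dual, i.e. that there is a non-degenerate $B$ on $M$ with $B(fm,m')=B(m,\sigma(f)m')$ for all $f\in H$.

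Because a simple module over a finite-dimensional $\cx$-algebra is determined up to isomorphism by its trace function, and the $\sigma$-dual of $M$ has trace function $f\mapsto\tr(\sigma(f)\mid M)$, it suffices to prove the trace identity $\tr(\sigma(f)\mid M)=\tr(f\mid M)$ for all $f\in H$. This is where the Corollary enters. On one hand the functional $f\mapsto\tr\pi(f)$ is invariant under $G$-conjugation of $f$, since $\pi(f^x)=\pi(x)^{-1}\pi(f)\pi(x)$ for $f^x(g)=f(xgx^{-1})$. On the other hand, on the double-coset basis $\sigma$ sends $\mathbf 1_{K_k g K_k}$ to $\mathbf 1_{K_k\,\ut g\,K_k}$, and the Corollary supplies $x\in\uv$ with $\ut x=x$ and $\ut g=xgx^{-1}$, so that $\ut g$ is conjugate to $g$. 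Morally this already forces the two traces to coincide, exactly as $\theta$-invariance of the Harish-Chandra character does in the character-theoretic argument of \cite{RV}.

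The hard part is to make this last identity rigorous without characters. One cannot match it term by term: the conjugator $x$ lies in $\uv$ but does not normalize $K_k$, so $\sigma$ does not preserve the double cosets, and $\tr\pi(\mathbf 1_{K_k g K_k})$ and $\tr\pi(\mathbf 1_{K_k\,\ut g\,K_k})$ agree only after a global cancellation expressing $f-\sigma(f)$ as a sum of commutators in $H$. Establishing this finite-level cocenter identity directly from the Corollary, bypassing the Harish-Chandra character and the density of orbital integrals that one would otherwise use, is the technical heart of the matter and is precisely the content we expect to isolate as the axiomatic framework of \S2. Granting that framework, Theorem~B reduces to the verification carried out in \S4 that $\uv$ and $\guv$, equipped with the filtration $\{c(\varpi^k\ml)\}_{k\ge 1}$ and the Corollary, satisfy its hypotheses.
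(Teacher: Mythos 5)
Your argument, as written, is not a proof of Theorem~B but a reduction of it to the very statement that constitutes the paper's technical core. You correctly reformulate the problem as a trace identity --- at level $K_k$, that $\tr(\sigma(f)\mid M)=\tr(f\mid M)$, equivalently $\tr\,\pi(\thf)=\tr\,\pi(f)$ for $f$ bi-$K_k$-invariant --- and you correctly identify the obstruction to proving it term by term on the double-coset basis: the conjugator $x$ furnished by the Corollary depends on $g$ and does not normalize $K_k$, so $\sigma$ does not permute double cosets. But at that point you write ``Granting that framework\dots'' and stop. The missing step is exactly the Proposition of \S2 (the generalization of Tupan's Theorem~1): \emph{every compact open subset of $G$ is a finite disjoint union of conjugate-$\theta$-stable compact open subsets.} Once this is known, one does not need any Hecke-algebra or cocenter analysis at all: characteristic functions of such sets span $C_c^\infty(G)$, for each such set $C$ with $\ut C=gCg^{-1}$ one has $\pi(\chi_{\ut C})=\pi(g)\pi(\chi_C)\pi(g)^{-1}$ and hence equal traces, so $\tr\,\pi(\thf)=\tr\,\pi(f)$ for all $f\in C_c^\infty(G)$ by linearity, and $\pii\cong\pic$ follows from the fact that the distribution character determines an irreducible representation up to equivalence. (Your finite-level variant via simple modules over $\mathcal{H}(G,K_k)$ is a workable substitute for that last step, but it is cosmetic; the gap is upstream of it.)

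Your diagnosis of what fills the gap also points in the wrong direction: the paper does not express $f-\thf$ as a sum of commutators, nor does it compare $\chi_{K_kgK_k}$ with $\chi_{K_k\,\ut g\,K_k}$. Instead it abandons the double-coset basis and builds, around each point $a$ of a given compact open set, a conjugate-$\theta$-stable neighborhood adapted to $a$'s own conjugator. Concretely: choose $\theta$-fixed $x_a$ with $x_a^{\mathstrut}ax_a^{-1}=\ut a$, and replace $\ml$ by the lattice $\ml(x_a)={\rm Ad}(x_a^{-1})\ml\cap\ml$, which is precisely what compensates for the failure of $x_a$ to normalize $c(\varpi^k\ml)$; one proves $\ut\ml(x)={\rm Ad}(x)\ml(x)$ for $\theta$-fixed $x$, whence $a\,c(\varpi^k\ml(x_a))$ is a compact open neighborhood of $a$ that is conjugate-$\theta$-stable (conjugate by $x_a^{\mathstrut}a^{-1}$). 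The remaining work is a covering argument: fix a compact open subgroup $K$ stabilizing $\ml$, sort the points $a$ by which coset $Kd_i$ (with $\ut d_i=d_i$) contains $x_a$, note $\ml(x_a)=\ml(d_i)$ on each class, choose levels $l_1,l_2,\dots$ so that the groups $c(\varpi^{l_i}\ml(d_i))$ are nested, and use compactness plus the fact that any two of the resulting neighborhoods are disjoint or nested to extract a finite \emph{disjoint} subcover from the maximal elements. Without this construction (or some substitute for it), the trace identity you need remains unproved, and with it your argument is complete --- which is to say, your proposal and the paper's proof coincide except that the paper proves the hard part.
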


\section{Framework for proof of Theorem~B}
Again let $G$ be the group of $F$-points of a reductive $F$-group. 
Under the hypotheses in \S\ref{hypotheses} below, we show that the essential thread of Tupan's line of argument carries over to $G$. 
In particular, subject to these hypotheses, $G$ admits a dualizing involution. 

\subsection{}  \label{hypotheses}
Let $\theta:G \to G$ be an involutary anti-automorphism (i.e., $\theta(xy) = \theta(y) \theta(x)$ for all $x, y \in G$ and $\theta^2 = 1$). 
Writing $\lag$ for the Lie algebra of $G$, the differential  $d \theta:\lag \to \lag$ is an involutary anti-automorphism of $\lag$ 
which we often denote simply by $\theta$. 
In situations where we need to consider both maps, we sometimes 
write $\theta_G$ for the map on the group and  $\theta_\lag$ for the induced map on
the Lie algebra. As before, we set
\begin{equation} \label{iota}
\ui g = \ut g^{-1}, \quad  g \in G, 
\end{equation}
so that $\iota:G \to G$ is an involutary automorphism of $G$.  For $x \in G$, let ${\rm Int} (x)$ denote the inner automorphism 
$g \mapsto xgx^{-1}:G \to G$. As usual, we write ${\rm Ad} (x)$ for the induced automorphism of $\lag$, that is, 
${\rm Ad}(x) = d \, {\rm Int } (x) $ for $x \in G$. 

We impose the following hypotheses for the remainder of the section. 

\begin{hypoth}
$   $ 
\begin{enumerate}[{\bf (1)}]
\item
There is an  $\fo$-lattice $\ml \subset \lag$ and a map $c: \lag_1 \to G$ for $\lag_1 \subset \lag$ such that the following hold.
\begin{enumerate}[{\rm (a)}]
\smallskip 
\item
$\ut \lag_1 = \lag_1$ and $\theta_G \circ c = c \circ \theta_\lag$.
\smallskip 
\item
${\rm Ad} (x)  \lag_1 = \lag_1$  and 
${\rm Int} (x)  c(X)  = c ({\rm Ad} (x) X)$ for all $x \in G$ and $X \in \lag$.
\smallskip 
\item
$\ut \ml = \ml$ and $\varpi \ml \subset \lag_1$.
\smallskip 
\item
For each $k \geq 1$, the restriction $c \mid \varpi^k \ml$ is a homeomorphism onto a compact open subgroup of $G$. In particular,  
the family $\{c (\varpi^k \ml) \}_{k \geq 1}$ consists of compact open subgroups and forms a neighborhood basis of the identity in $G$. 
\end{enumerate}

\medskip

\item
For each $a \in G$, there is an $x \in G$ with ${}^{\theta}x = x$ such that $xax^{-1} = {}^{\theta}a$
\end{enumerate}
\end{hypoth}

The Corollary to Theorem~A shows that Hypothesis (2) holds for the classical and similitude groups $\uv$ and $\guv$. 
We will verify the various parts of Hypothesis (1) for these groups in \S\ref{verification}.

\begin{Rmk}
To obtain Tupan's setting \cite{Tupan}, we take $G = {\rm GL}_n(F)$ and $\lag = {\rm M}_n(F)$. The map $\theta$ is simply 
the transpose on $G$ and $\lag$. Further, $\ml = {\rm M}_n(\fo)$,  
$\lag_1 = \{ X \in {\rm M}_n(F) : \det(1+X) \neq 0 \}$ and $c:\lag_1 \to G$ is given by $c(X) = 1+X$. 
It is immediate that Hypothesis (1) holds. As noted in the introduction, that hypothesis (2) holds is a classical result in linear algebra. 
\end{Rmk}

It is convenient to introduce the following terminology. 
\begin{Def}
{\rm  A  subset $S$ of $G$ is}  \it conjugate-$\theta$-stable {\rm  if there is a $g \in G$ such that $\ut S = gSg^{-1}$. }
\end{Def}

We state a key technical result, our generalization of  \cite{Tupan} Theorem~1. 

\begin{Prop}
Any compact open subset of $G$ can be decomposed as a disjoint union of finitely many 
conjugate-$\theta$-stable compact open subsets. 
\end{Prop}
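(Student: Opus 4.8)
The plan is to build the required decomposition out of cosets of the compact open subgroups $K_m := c(\varpi^m \ml)$ furnished by Hypothesis~(1). Parts~(c)--(d) guarantee that each $K_m$ is a compact open subgroup with $\ut{K_m} = K_m$ and that $\{K_m\}_{m\ge 1}$ is a neighborhood basis of the identity. Two elementary observations will drive the assembly. First, the property of being conjugate-$\theta$-stable is invariant under conjugation: if $\ut S = gSg^{-1}$ and $h \in G$, then since $\theta$ is an anti-automorphism one computes $\ut{(hSh^{-1})} = \ut h^{-1}(gSg^{-1})\ut h = g'(hSh^{-1})g'^{-1}$ with $g' = \ut h^{-1} g h^{-1}$. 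Second, if $S_1, \dots, S_r$ are pairwise disjoint and a single $g$ satisfies $\ut{S_j} = g S_{\sigma(j)} g^{-1}$ for some permutation $\sigma$ of $\{1,\dots,r\}$, then $\bigsqcup_j S_j$ is conjugate-$\theta$-stable with conjugator $g$. Thus it suffices to partition a compact open $S$ into finitely many blocks, each a disjoint union of cosets carried onto itself, as a family, by a single conjugator.

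Since $S$ is compact open and the $K_m$ shrink to the identity, for $m$ large $S$ is a finite disjoint union of left cosets $a_1 K, \dots, a_n K$ with $K = K_m$ and each $a_i \in S$. For each $i$ I invoke Hypothesis~(2) to fix $x_i$ with $\ut{x_i} = x_i$ and $x_i a_i x_i^{-1} = \ut{a_i}$; the Corollary to Theorem~A shows such $x_i$ exist for our groups. Because $\ut K = K$ and $\theta$ reverses products, $\ut{(a_i K)} = K\ut{a_i} = K x_i a_i x_i^{-1}$, so conjugation by $x_i$ is the natural device for matching the $\theta$-image of $a_i K$ back to cosets sitting inside $S$. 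The sole obstruction to $x_i$ doing this cleanly is that it need not normalize $K$; here Hypothesis~(1)(b) gives exact control, namely $x_i K x_i^{-1} = c(\varpi^m\,\mathrm{Ad}(x_i)\ml)$, again a Cayley image of a lattice commensurable with $\ml$. Replacing $m$ by a large enough common value, the finitely many conjugations by the $x_i$ move $K$ only within a fixed bounded range of lattices, so all the sets $x_i^{\pm1} K x_i^{\mp1}$, and hence the conjugates and $\theta$-images of the $a_i K$, remain finite unions of cosets of one fixed small subgroup.

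The crux is the bookkeeping that turns this local matching into an honest partition, and I expect it to be the most delicate step. The essential tension is that $\theta$ sends the left coset $a_i K$ to the right coset $K\ut{a_i}$, whereas conjugation produces left cosets of the distorted subgroups $x_i K x_i^{-1}$; reconciling these, so that one conjugator pairs a whole block of cosets with its $\theta$-image and the resulting conjugate-$\theta$-stable blocks are disjoint and exhaust $S$, is where the real difficulty lies. My intended remedy is to organize the cosets by the combined action of conjugation by $N_G(K)$, which has finite orbits on the coset data since $N_G(K)$ is compact modulo the center, together with $\theta$, which permutes this finite set of orbits; within each $\theta$-orbit of orbits, Hypothesis~(2) supplies the single conjugator realizing $\theta$ as a genuine conjugation, and the two closure properties from the first step then fuse the cosets of that orbit into one conjugate-$\theta$-stable compact open set. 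Making this uniform, so that the conjugators, which lie outside $N_G(K)$, nonetheless act consistently across the finitely many orbits produced from the compact set $S$, is exactly the point where the quantitative control of Hypothesis~(1)(b) is indispensable.
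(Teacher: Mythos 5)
Your two closure observations (conjugation-invariance of conjugate-$\theta$-stability, and fusing blocks via a permutation with a common conjugator) are correct, but the crux you yourself flag is never resolved, and it cannot be resolved along the route you propose. Keeping a single fixed subgroup $K = c(\varpi^m \ml)$, one has $\ut (a_i K) = K \, x_i a_i x_i^{-1}$, while the natural conjugator $x_i a_i^{-1}$ gives $(x_i a_i^{-1})(a_i K)(x_i a_i^{-1})^{-1} = x_i K a_i x_i^{-1}$; these agree if and only if $x_i$ normalizes $K$, and no choice of ``sufficiently large $m$'' forces this --- shrinking $K$ never makes a given $x_i$ normalize it. Worse, shrinking $K$ increases the number of cosets of $K$ in $S$, hence the number of conjugators $x_i$ you must accommodate, so ``replacing $m$ by a large enough common value'' is circular: the required $m$ depends on the $x_i$, which depend on the coset representatives $a_i$, which depend on $m$. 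The $N_G(K)$ bookkeeping does not repair this: conjugation by an element of $N_G(K)$ need not carry the finite set of cosets $a_1K, \ldots, a_nK$ into itself (it moves them out of $S$), so the claimed finite orbit structure is not even defined, and compactness of $N_G(K)$ modulo the center is in any case a fact about reductive groups lying outside the framework's hypotheses.

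The missing idea, which is the heart of the paper's argument, is to adapt the subgroup to the conjugator rather than fix one $K$ and try to repair the mismatch combinatorially. For $\theta$-fixed $x$ the paper sets $\ml(x) = {\rm Ad}(x^{-1})\ml \cap \ml$ and proves $\ut \ml(x) = {\rm Ad}(x)\ml(x)$; combined with (1)(a), (1)(b), (1)(d) this shows $c(\varpi^k \ml(x))$ is a compact open subgroup satisfying $\ut c(\varpi^k\ml(x)) = x\, c(\varpi^k \ml(x))\, x^{-1}$, whence $a\, c(\varpi^k \ml(x_a))$ is conjugate-$\theta$-stable with the single conjugator $x_a a^{-1}$ --- exactly the identity your blocks lack. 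The assembly is then also different from what you envision: $\ml(x_a)$ depends only on the coset $K x_a$, where $K$ is the stabilizer of $\ml$ in a compact open subgroup; the cosets containing $\theta$-fixed elements are countable, say $Kd_1, Kd_2, \ldots$, and one chooses levels $l_i$ inductively so that the groups $c(\varpi^{l_i}\ml(d_i))$ are nested. The resulting conjugate-$\theta$-stable neighborhoods $a\, c(\varpi^{l_i}\ml(d_i))$, $a \in C$, are then pairwise disjoint or nested, so compactness yields a finite cover whose maximal elements give the desired disjoint decomposition --- with no normalizers and no fixed-$K$ coset combinatorics at all.
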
 

\subsection{}
Granting the proposition, we show that it leads quickly to the main result. 

\begin{Thm}
The map $\iota:G \to G$ is a dualizing involution. 
\end{Thm}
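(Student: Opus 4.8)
The plan is to deduce the theorem from the Proposition by the character-theoretic form of the Gelfand--Kazhdan method. Every irreducible smooth representation $\pi$ of $G$ is admissible, so for $f \in C_c^\infty(G)$ the operator $\pi(f) = \int_G f(g)\,\pi(g)\,dg$ has finite rank and $\Theta_\pi(f) = \tr \pi(f)$ defines the distribution character of $\pi$. Since an irreducible admissible representation is determined up to isomorphism by its distribution character (linear independence of characters), to prove $\pii \cong \pic$ it suffices to show $\Theta_{\pii} = \Theta_{\pic}$ as distributions on $G$. I would first note that, having order at most two as (anti)automorphisms of the unimodular group $G$, both $\theta$ and $\iota$ preserve the Haar measure $dg$, which lets me move these maps freely under the integral sign.

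Next I would carry out the two routine character computations. Writing $\check f(g) := f(g^{-1})$, one has $\Theta_{\pic}(f) = \Theta_\pi(\check f)$, since $\pic(f)$ is the transpose of the finite-rank operator $\pi(\check f)$ and an operator and its transpose have equal trace. Changing variables $g \mapsto \iota g$ gives $\Theta_{\pii}(f) = \Theta_\pi(f \circ \iota)$. Using $\ui g = \ut(g^{-1}) = (\ut g)^{-1}$, one checks directly that $f \circ \iota = (\check f)^\theta$, where $\psi^\theta(g) := \psi(\ut g)$. Hence, as $f \mapsto \check f$ is a bijection of $C_c^\infty(G)$, the identity $\Theta_{\pii} = \Theta_{\pic}$ is equivalent to the single assertion that $\Theta_\pi$ is $\theta$-invariant:
\[
\Theta_\pi(\psi^\theta) = \Theta_\pi(\psi), \qquad \psi \in C_c^\infty(G).
\]

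Finally I would prove this $\theta$-invariance by combining the Proposition with the conjugation-invariance of $\Theta_\pi$. For a conjugate-$\theta$-stable compact open set $S$, say $\ut S = gSg^{-1}$, the characteristic function satisfies $(\mathbf 1_S)^\theta = \mathbf 1_{\ut S} = \mathbf 1_{gSg^{-1}}$ (using $\theta^2 = 1$), which is the ${\rm Int}(g)$-conjugate of $\mathbf 1_S$; since $\Theta_\pi$ is an invariant distribution this gives $\Theta_\pi((\mathbf 1_S)^\theta) = \Theta_\pi(\mathbf 1_S)$. For an arbitrary $\psi \in C_c^\infty(G)$ I would write $\psi$ as a finite linear combination of characteristic functions of compact open sets (its nonzero level sets), use the Proposition to break each such set into finitely many conjugate-$\theta$-stable compact open pieces, and conclude by linearity that $\Theta_\pi(\psi^\theta) = \Theta_\pi(\psi)$, as required.

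The substantive difficulty lies entirely in the Proposition, which I am assuming; granting it, the steps above are essentially formal. The only points needing genuine care are the Haar-invariance of $\theta$ and $\iota$ and the bookkeeping behind $f \circ \iota = (\check f)^\theta$, together with the conceptual observation at the heart of the method: the invariant distribution $\Theta_\pi$ need only be tested for $\theta$-invariance on characteristic functions of conjugate-$\theta$-stable sets, and the Proposition guarantees that these suffice to recover all of $C_c^\infty(G)$.
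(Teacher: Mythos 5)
Your proposal is correct and takes essentially the same route as the paper: both reduce $\tr \, \pii(f) = \tr \, \pic(f)$ to the $\theta$-invariance of the distribution character, and both prove that invariance by using the Proposition to decompose test functions into characteristic functions of conjugate-$\theta$-stable compact open sets and then invoking conjugation-invariance of the trace. The only differences are cosmetic, e.g.\ you spell out the Haar-measure invariance under $\iota$ and the transpose argument for $\tr \, \pic(f) = \tr \, \pi(\fc)$, which the paper leaves implicit.
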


Before the proof, we recall some background and set up some notation. 
Let $C_c^\infty(G)$ denote the space of locally constant compactly supported functions $f:G \to \cx$. 
Let $(\pi, V)$ be a smooth representation of $G$ and let $f \in C_c^\infty(G)$. The operator $\pi(f):V \to V$ is given by 
\[
\pi(f)v  = \int_G f(g) \pi(g) v \, dg, \quad \quad v \in V, 
\]
where the integral is with respect to a fixed Haar measure on $G$.  Suppose now that $(\pi,V)$ is irreducible. It follows that $(\pi,V)$ is 
{\it admissible}, that is, the space $V^K$ of $K$-fixed vectors has finite dimension for any open subgroup $K$ of $G$  \cite{Jac}.  Thus
the image of $\pi(f)$ has finite dimension and so $\pi(f)$ has a well-defined trace. The resulting linear functional 
$f \mapsto \tr \, \pi(f): C_c^\infty(G) \to \cx$ is the {\it distribution character} of $\pi$. It determines $\pi$ up to equivalence (\cite{BZ} 2.20).
With $\fc(g) = f(g^{-1})$ for $f \in C_c^\infty(G)$ and $g \in G$,  it is straightforward to check that $ \tr \, \pic(f) =  \tr \, \pi(\fc)$. 

\begin{proof}
For any compact open subset $C$ of $G$,  we write $\chi_C$ for the characteristic function of $C$. 
If ${}^{\theta}C = gCg^{-1}$ for $g \in G$ then 
\begin{equation} \label{C-conj}
  \pi(\chi_{{}^{\theta}C}) = \pi(g) \pi(\chi_C)\pi(g)^{-1}.
  \end{equation}
By Proposition~\ref{hypotheses}, any element of $C_c^\infty(G)$ can be written as a linear combination of characteristic functions of 
conjugate-$\theta$-stable compact open subsets of $G$. We set $\thf(g) = f({}^{\theta}g)$ for $f \in C_c^\infty(G)$ and $g \in G$. 
Using (\ref{C-conj}), it follows that  
\[
      \tr  \,  \pi(f)    =   \tr \, \pi(\thf), \quad \quad  \forall \,\, f \in C_c^\infty(G).
      \]
Now $\pii(f) = \pi( (f^{\vee}) {}^{\theta})$ and thus 
\begin{align*} 
           \tr \, \pii(f)  &=  \tr \, \pi( (f^{\vee}){}^{\theta})) \\  &= \tr \, \pi(\fc) \\ &= \tr \, \pic(f), \quad \quad  \forall \,\, f \in C_c^\infty(G).
       \end{align*}    
Therefore $\pii \cong \pic$.        
\end{proof}

\subsection{} \label{lemma1}
We now begin to work towards a proof of Proposition~\ref{hypotheses}.
For $x \in G$, let 
\[
      \ml(x) = {\rm Ad}(x^{-1})   \ml  \cap \ml.
\]

\begin{Lem} 
Suppose ${}^{\theta} x = x$ for $x \in G$. Then ${}^{\theta} \ml(x) = {\rm Ad}(x) \ml(x)$.
\end{Lem}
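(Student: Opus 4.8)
The plan is to first pin down how $\theta_\lag$ interacts with ${\rm Ad}(x)$ when $x$ is $\theta$-fixed, and then to exploit that both $\theta_\lag$ and ${\rm Ad}(x)$ are linear bijections of $\lag$ and so commute with the formation of intersections.

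For the first step I would work at the group level. Since $\theta_G$ is an anti-automorphism it fixes the identity and satisfies $\theta_G(x^{-1}) = \theta_G(x)^{-1}$; combined with the assumption $\theta_G(x) = x$ this gives $\theta_G(x^{-1}) = x^{-1}$, so that for any $g \in G$
\[
\theta_G(xgx^{-1}) = \theta_G(x^{-1})\,\theta_G(g)\,\theta_G(x) = x^{-1}\,\theta_G(g)\,x,
\]
that is, $\theta_G \circ {\rm Int}(x) = {\rm Int}(x^{-1}) \circ \theta_G$ as maps $G \to G$. Differentiating this identity at the identity (each map fixes it) yields
\[
\theta_\lag \circ {\rm Ad}(x) = {\rm Ad}(x^{-1}) \circ \theta_\lag
\]
on $\lag$. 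Since $x^{-1}$ is also $\theta$-fixed, the same relation applies with $x$ and $x^{-1}$ interchanged, giving the equivalent form $\theta_\lag \circ {\rm Ad}(x^{-1}) = {\rm Ad}(x) \circ \theta_\lag$, which is what I will use below.

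For the second step, because $\theta_\lag$ is a bijection of $\lag$ it commutes with intersection, so
\[
\ut \ml(x) = \theta_\lag\bigl({\rm Ad}(x^{-1})\ml\bigr) \cap \theta_\lag(\ml) = {\rm Ad}(x)\,\theta_\lag(\ml) \cap \ml = {\rm Ad}(x)\ml \cap \ml,
\]
where the middle equality uses the compatibility from the first step together with the stability $\theta_\lag(\ml) = \ml$ of Hypothesis (1)(c). On the other hand, since ${\rm Ad}(x)$ is a linear bijection it too commutes with intersection, whence
\[
{\rm Ad}(x)\ml(x) = {\rm Ad}(x)\bigl({\rm Ad}(x^{-1})\ml \cap \ml\bigr) = \ml \cap {\rm Ad}(x)\ml.
\]
Comparing the two displays gives $\ut \ml(x) = {\rm Ad}(x)\ml(x)$, as required.

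The only genuinely delicate point is the first step: keeping track of the inversion that the anti-automorphism $\theta_G$ introduces, so that the correct relation is $\theta_\lag \circ {\rm Ad}(x) = {\rm Ad}(x^{-1}) \circ \theta_\lag$ and \emph{not} a version without the inverse. The hypothesis $\theta_G(x) = x$ is exactly what makes the conjugating element emerge as $x^{-1}$ rather than $\theta_G(x)^{-1}$; without $\theta$-fixedness one would instead obtain ${\rm Int}(\theta_G(x)^{-1})$ and the clean statement would fail. Everything after that is formal manipulation of intersections under bijections, using only $\theta_\lag \ml = \ml$.
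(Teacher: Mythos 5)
Your proof is correct and follows essentially the same route as the paper: both derive the compatibility $\theta_\lag \circ {\rm Ad}(x^{-1}) = {\rm Ad}(\ut x) \circ \theta_\lag$ from the anti-automorphism property of $\theta_G$ (the paper states it for arbitrary $x$ and specializes to $\ut x = x$ at the end, whereas you impose $\theta$-fixedness from the start), and then both conclude by intersecting lattices using $\ut \ml = \ml$. The bookkeeping of the inverse that you flag as the delicate point is handled identically in the paper's identity $\ut({\rm Ad}(x)X) = {\rm Ad}(\ut x^{-1})(\ut X)$.
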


\begin{proof}
For any $x \in G$ and $X \in \lag$, 
\begin{equation}   \label{Ad-identity}
\ut ( {\rm Ad}(x) X)  = {\rm Ad}(\ut x^{-1} ) (\ut X). 
\end{equation} 
To check this, note that the left side is 
\begin{align*} 
        (d \theta_G \circ d \, {\rm Int}(x))( X )    &=  d(\theta_G  \circ {\rm Int}(x) )  ( X)   \\
                                                                     &=  d( {\rm Int}(\, {}^{\theta_G} x^{-1} )  \circ \theta_G ) ( X ) \\
                                                                     &=   {\rm Ad} ({}^{\theta_G} x^{-1}) ( {}^{\theta_\lag} X).
\end{align*}

Thus 
\begin{align*} 
  \ut \ml(x)   &= \ut ( {\rm Ad}(x^{-1})\ml  ) \cap \ut \ml  \\
                      &=  {\rm Ad} (\ut x)   ( \ut \ml )  \cap \ut \ml   \quad \text{(by (\ref{Ad-identity}))} \\
                   &=  {\rm Ad} (\ut x)   \ml  \cap  \ml \quad \quad \text{(using $\ut \ml = \ml$)} \\
                   &=  {\rm Ad} (\ut x) \, ( \ml \cap {\rm Ad}(\ut x^{-1}) \ml )  \\
                   &=  {\rm Ad} (\ut x) \ml (\ut x).
\end{align*}
In particular, if $\ut x  = x$ then ${}^{\theta} \ml(x) = {\rm Ad}(x) \ml(x)$.
\end{proof}

\subsection{} \label{lemma2}
Our next observation 
is central to  the proof of Proposition~\ref{hypotheses}. 
\begin{Lem}
Let $a, x \in G$ with ${}^{\theta}x= x$ and $xax^{-1} = {}^{\theta}a$.  

\begin{enumerate}[{\rm (1)}]
\item
The set $c(\varpi^k \ml(x) )$ is a compact open subgroup of $G$ (for $k \geq 1$).  

\medskip

\item
The set $a c(\varpi^k \ml(x) )$ is a conjugate-$\theta$-stable compact open neighborhood of $a$ (for $k \geq 1$).  
\end{enumerate}

\end{Lem}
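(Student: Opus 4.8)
The plan is to prove the two parts in order, using Hypothesis (1) and the previous Lemma as the main engines.

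For part (1), I would observe that $\ml(x) = {\rm Ad}(x^{-1})\ml \cap \ml$ is an intersection of two $\fo$-lattices in $\lag$, hence is itself an $\fo$-lattice. Indeed, $\ml$ is an $\fo$-lattice by Hypothesis (1)(c), and ${\rm Ad}(x^{-1})\ml$ is the image of a lattice under an $F$-linear automorphism of $\lag$, so it is again a lattice; the intersection of two lattices is a lattice. Consequently $\varpi^k\ml(x)$ is a lattice contained in $\varpi^k\ml$, so by Hypothesis (1)(d) the restriction $c\mid \varpi^k\ml$ is a homeomorphism onto a compact open subgroup, and I would want $c(\varpi^k\ml(x))$ to be a \emph{subgroup}. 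This is the step that needs genuine attention: being contained in a compact open subgroup does not by itself make $c(\varpi^k\ml(x))$ a subgroup. The cleanest route is to recognize $\varpi^k\ml(x)$ as $\varpi^k\ml \cap {\rm Ad}(x^{-1})(\varpi^k\ml)$, then use Hypothesis (1)(b) — namely ${\rm Int}(x)\,c(X) = c({\rm Ad}(x)X)$ — to transport the subgroup $c(\varpi^k\ml)$ by ${\rm Int}(x)$ and identify the intersection of two subgroups. Concretely, I expect
\[
   c(\varpi^k\ml(x)) = c(\varpi^k\ml) \cap c\bigl({\rm Ad}(x^{-1})\varpi^k\ml\bigr) = c(\varpi^k\ml) \cap {\rm Int}(x^{-1})\,c(\varpi^k\ml),
\]
exhibiting it as an intersection of two compact open subgroups, hence a compact open subgroup. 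Making the first equality rigorous requires that $c$ restricted to $\varpi^k\ml$ be injective with the stated image behavior under intersection, which follows from the homeomorphism property in (1)(d) together with the compatibility in (1)(b).

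For part (2), the containment and topological properties come for free: left translation by $a$ is a homeomorphism of $G$, so $a\,c(\varpi^k\ml(x))$ is a compact open set, and since $1 \in c(\varpi^k\ml(x))$ (as $0 \in \varpi^k\ml(x)$ and $c(0)=1$, using (1)(a) with $\theta_\lag 0 = 0$ or directly from the Cayley map normalization), the set is a neighborhood of $a$. The substantive claim is conjugate-$\theta$-stability: I must produce $g \in G$ with $\ut\bigl(a\,c(\varpi^k\ml(x))\bigr) = g\,a\,c(\varpi^k\ml(x))\,g^{-1}$. Here I would use that $\theta$ is an anti-automorphism, so $\ut(a\,c(Y)) = \ut c(Y)\,\ut a = c(\ut Y)\,\ut a$ by Hypothesis (1)(a), for $Y \in \varpi^k\ml(x)$; the previous Lemma gives $\ut\ml(x) = {\rm Ad}(x)\ml(x)$, hence $\ut(\varpi^k\ml(x)) = {\rm Ad}(x)(\varpi^k\ml(x))$, and applying (1)(b) turns $c(\ut Y)$ into ${\rm Int}(x)\,c$ of an element of $\varpi^k\ml(x)$. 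Combining this with the hypothesis $\ut a = xax^{-1}$ should let me conjugate the whole set by $x$ and rearrange, yielding $g = x$ (or a simple modification thereof).

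The main obstacle is the subgroup property in part (1): transferring the lattice-intersection identity through the Cayley map $c$ to an intersection-of-subgroups identity. Everything else is bookkeeping with the anti-automorphism property of $\theta$ and the equivariance relations in Hypothesis (1)(a),(b), but this one step is where the geometric content of the Cayley map — specifically that $c$ matches intersections of $\theta$-stable, ${\rm Ad}$-compatible lattices to intersections of the corresponding subgroups — actually gets used.
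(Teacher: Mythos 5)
Your proposal matches the paper's proof in both parts: part (1) is established there by exactly your identity $c(\varpi^k \ml(x)) = x^{-1} c(\varpi^k \ml)\,x \,\cap\, c(\varpi^k \ml)$, cited to Hypotheses (1)(b) and (1)(d), and part (2) by exactly your computation ${}^{\theta}\bigl(a\,c(\varpi^k \ml(x))\bigr) = c\bigl(\varpi^k\,{}^{\theta}\ml(x)\bigr)\,{}^{\theta}a = x\,c(\varpi^k \ml(x))\,x^{-1}\,xax^{-1}$ using (1)(a), (1)(b) and the preceding lemma. The one detail to fix is the conjugating element: it is $g = xa^{-1}$ rather than $x$, since $x\,c(\varpi^k \ml(x))\,a\,x^{-1} = (xa^{-1})\,\bigl(a\,c(\varpi^k \ml(x))\bigr)\,(xa^{-1})^{-1}$, which is precisely the ``simple modification'' you anticipated.
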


\begin{proof}
By hypotheses (1)(b) and (1)(d), 
\[
   c ( \varpi^k \ml(x) )   =   x^{-1} c ( \varpi^k \ml) x \cap c(\varpi^k \ml). 
   \] 
This proves part~(1). To prove part~(2), note that 
\begin{align*} 
    {}^{\theta} (a c(\varpi^k \ml(x) )   &=  {}^{\theta}c(\varpi^k  \ml(x) )  \, {}^{\theta}a \\
                                              &=   c (\varpi^k \cdot {}^{\theta}\ml(x) ) \, xax^{-1} \hspace{20pt} \text{(by hypothesis~(1)(a))} \\
                                              &=  x   c(\varpi^k \, \ml(x)) x^{-1} \, xax^{-1} \hspace{7pt} \text{(by hypothesis~(1)(b) and Lemma \ref{lemma1})}\\
                                                       &=  ( xa^{-1} ) \,a c(\varpi^k \ml(x) )\, (xa^{-1})^{-1}, 
                                                       \end{align*}                                                        
so that $a c(\varpi^k \ml(x))$ is conjugate-$\theta$-stable.  \end{proof} 

\subsection{}
Let $K_0$ be any compact open subgroup of $G$. The group $K_0$ acts on $\lag$ via the (restriction of the) adjoint action. We set 
\[
   K = {\rm Stab}_{K_0} \, \ml  \subset K_0.
\]
Thus $K$ is a compact open subgroup of $G$ that stabilizes $\ml$. The coset space $K \backslash G$ is countable and 
so the collection of cosets that contain some $\theta$-fixed element is also countable. We label these cosets as 
$\{ K d_i : i \geq 1\}$ where ${}^{\theta} d_i = d_i$ (i.e., each representative $d_i$ is $\theta$-fixed). 

Note that if ${}^{\theta}x= x$ and $x \in Kd_i$, then 
\begin{equation} \label{eqn}
       \ml (x) = \ml(d_i). 
       \end{equation}
 Indeed, if $x = kd_i$ with $k \in K$, then 
 \begin{align*} 
     \ml(x)    &=   {\rm Ad} (d_i^{-1}) {\rm Ad}(k^{-1}) \ml   \cap \ml \\
                  &=   {\rm Ad} (d_i^{-1})  \ml  \cap \ml \\
                  &=    \ml(d_i).
                  \end{align*}

\subsection{} 
We can now prove Proposition~\ref{hypotheses} which for convenience we restate as follows. 

\begin{Prop}
For any  compact open subset $C$ of $G$, there exist finitely many conjugate-$\theta$-stable compact open subsets 
$C_1, \ldots, C_s$ of $C$ such that $C = \bigsqcup_{i=1}^s C_i$. 
\end{Prop}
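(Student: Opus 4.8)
The plan is to assemble the decomposition from the conjugate-$\theta$-stable neighborhoods produced by the Lemma of \S\ref{lemma2}, first reducing to a finite combinatorial problem and then disjointifying. First I would localize: given $a \in C$, Hypothesis~(2) supplies a $\theta$-fixed $x$ with $xax^{-1} = {}^{\theta}a$, and if $x \in Kd_i$ then $\ml(x) = \ml(d_i)$ by (\ref{eqn}); by the Lemma of \S\ref{lemma2} the coset $a\,c(\varpi^k\ml(d_i))$ is a conjugate-$\theta$-stable compact open neighborhood of $a$ for every $k \geq 1$. Since $c(\varpi^k\ml(d_i)) \subseteq c(\varpi^k\ml)$, these neighborhoods shrink to $a$, so openness of $C$ lets me choose $k$ with $a\,c(\varpi^k\ml(d_i)) \subseteq C$. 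These neighborhoods cover $C$; by compactness finitely many $N_1, \dots, N_r$, with $N_t = a_t\,c(\varpi^{k_t}\ml(d_{i_t}))$, already cover $C$, so only finitely many of the subgroups $c(\varpi^k\ml(d_i))$ intervene.

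The second step is to pass to a common atom. Each $N_t$ is a coset of the compact open subgroup $H_t := c(\varpi^{k_t}\ml(d_{i_t}))$, and since the lattices $\ml(d_{i_t})$ are commensurable with $\ml$, for $K$ large the subgroup $H := c(\varpi^{K}\ml)$ is contained in every $H_t$; note $H$ is itself $\theta$-stable because ${}^{\theta}\ml = \ml$ and $\theta_G \circ c = c \circ \theta_\lag$. Hence each $N_t$, and therefore $C = \bigcup_t N_t$, is a finite disjoint union of left cosets of $H$. This discretizes the problem: the $H$-cosets become finitely many indivisible atoms, and the number of them contained in a set is a legitimate (well-founded) induction measure.

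I would then disjointify by induction on the number of $H$-cosets in $C$. If $C = \emptyset$ there is nothing to prove. Otherwise I would peel off a single conjugate-$\theta$-stable piece $N = a\,c(\varpi^k\ml(d_{i(a)}))$ with $a \in C$, chosen so that $H \subseteq c(\varpi^k\ml(d_{i(a)}))$ and $N \subseteq C$ -- securing the existence of such an $N$ is the delicate point, discussed below -- and set $C_1 = N$. As $N$ is a nonempty union of $H$-cosets, the complement $C \setminus N$ is again a compact open union of strictly fewer $H$-cosets, so by the inductive hypothesis it splits as a finite disjoint union of conjugate-$\theta$-stable sets $C_2, \dots, C_s$, and then $C = \bigsqcup_{i=1}^s C_i$. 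The essential feature is that this scheme never forms the difference of two conjugate-$\theta$-stable sets; it removes one and recurses on the remainder.

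I expect that last step to be the main obstacle. Distinct conjugate-$\theta$-stable sets carry distinct conjugators (for $N = a\,c(\varpi^k\ml(x))$ the conjugator is $xa^{-1}$, as in the proof of the Lemma of \S\ref{lemma2}), so neither $N \cap N'$ nor $N \setminus N'$ need be conjugate-$\theta$-stable; this is precisely why a naive inclusion--exclusion disjointification of the cover $N_1, \dots, N_r$ cannot work, and why the removal must always peel a \emph{fresh} conjugate-$\theta$-stable coset. The price is that at every stage one must produce a piece that is at once conjugate-$\theta$-stable, saturated by $H$-cosets, and contained in the current set -- that is, a subgroup $c(\varpi^k\ml(x)) \supseteq H$ with $a\,c(\varpi^k\ml(x)) \subseteq C$. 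Reconciling ``large enough to be $H$-saturated'' with ``small enough to fit'' is the crux; it is here that one must exploit the freedom in the scale $k$, together with the fact -- from (\ref{eqn}) -- that $\ml(x)$ depends only on the coset $Kd_i$, so that only finitely many subgroup-towers occur and, within a fixed tower, the cosets $a\,c(\varpi^k\ml(d_i))$ are pairwise equal or disjoint.
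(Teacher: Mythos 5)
Your local ingredients match the paper's exactly (Hypothesis~(2), Lemma~\ref{lemma2}~(2), and the coset-invariance (\ref{eqn})), but your global assembly has a genuine gap at precisely the point you flag, and it is not a removable delicacy: it is where the proof actually lives. Your induction requires, at every stage, a piece $N = a\, c(\varpi^k \ml(d_{i(a)}))$ that is simultaneously (i) conjugate-$\theta$-stable, (ii) $H$-saturated, i.e.\ $H \subseteq c(\varpi^k \ml(d_{i(a)}))$, and (iii) contained in the current remainder. The first peel succeeds (take $N_1$ itself), but after it the remainder $C' = C \setminus N_1$ is no longer a coset, and the two remaining constraints pull in opposite directions: (ii) bounds $k$ above, while (iii) may force $k$ large, since for $a \in C'$ adjacent to the removed piece every admissible neighborhood $aU$ with $U \supseteq H$ is a union of several $H$-cosets, some of which may meet $N_1$. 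Cosets of \emph{different} subgroups $c(\varpi^k\ml(d_i))$ and $c(\varpi^{k'}\ml(d_{i'}))$ can partially overlap, and your observation that cosets within one tower are equal-or-disjoint says nothing about overlaps across towers --- which is the only problematic case. There is a second, related leak: the indices $i(a)$ for $a \in C'$ are not confined to the finitely many indices appearing in your initial cover, so your atom $H$ (chosen to sit inside the finitely many $H_t$) need not sit inside the subgroup attached to a fresh point of the remainder at all.

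The paper dissolves this tension with two moves your proposal does not make. First, it reduces to $C = b\, c(\varpi^{l_0}\ml)$, a single coset of a basis subgroup; then for $a \in C$ and \emph{any} subgroup $U \subseteq c(\varpi^{l_0}\ml)$ one has $aU \subseteq a\,c(\varpi^{l_0}\ml) = C$ automatically, so ``small enough to fit'' costs nothing and no per-point scale is needed. Second --- the key idea you are missing --- it chooses the scales $l_i$ once and for all, inductively in $i$, so that \emph{all} the subgroups form a single decreasing chain $c(\varpi^{l_i}\ml(d_i)) \subseteq c(\varpi^{l_{i-1}}\ml(d_{i-1})) \subseteq \cdots \subseteq c(\varpi^{l_0}\ml)$, over the full (countable) list of cosets $Kd_i$. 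Each $a \in C$ then acquires the conjugate-$\theta$-stable neighborhood $a\,c(\varpi^{l_{i(a)}}\ml(d_{i(a)})) \subseteq C$, and because the subgroups involved are totally ordered by inclusion, any two such neighborhoods are either disjoint or nested. Compactness extracts a finite cover, and the maximal elements of that cover are automatically pairwise disjoint --- no common atom, no $H$-coset counting, no peeling induction. (The paper also splits off the easy case where $C$ contains a $\theta$-fixed point $a$, since then $C = a\,c(\varpi^{l_0}\ml)$ is itself conjugate-$\theta$-stable by Lemma~\ref{lemma2}~(2).) In short, the cross-index nesting converts ``partial overlap'' into ``disjoint or nested,'' which is exactly what your fixed-atom scheme cannot manufacture.
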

\begin{proof} 
By hypothesis, the family $\{ c(\varpi^k \ml) \}_{k \geq 1}$ is a neighborhood basis of $1 \in G$
consisting of compact open subgroups. 
It follows that it suffices to prove the result for $C = b c(\varpi^{l_0} \ml)$ for any  $b \in G$ and any $l_0 \geq 1$ 

For each $a \in C$, we choose an $x_a \in G$ with ${}^{\theta}x_a = x_a$ such that $x_{\mathstrut a} a x_a^{-1} = {}^{\theta}a$. 
In the special case ${}^{\theta} a =a$, we always take $x_a = 1$.  
For each $i \geq 1$, let
\[
   \mathcal{C}_i = \{ a \in C :   x_a \in Kd_i \}, 
   \]
so that $C = \bigsqcup_{i \geq 1} \mathcal{C}_i$. 

Assume first that there is some $\theta$-fixed element $a$ in $C$. Then $x_a = 1$ and 
$\ml (x_a) = \ml$, so that 
\[
     C = a c(\varpi^{l_0} \ml) = a c(\varpi^{l_0} \ml(x_a) ). 
     \]
Thus $C$ is itself conjugate-$\theta$-stable by Lemma~\ref{lemma2}~(2). 

Suppose now that $C$ contains no $\theta$-fixed element. In this case we use the following inductive construction. 
We first choose $l_1 \geq 1$ such that 
$c (\varpi^{l_1} \ml (d_1) ) \subset   c (\varpi^{l_0} \ml)$. 
By induction, for $i \geq 2$  we can choose $l_i \geq 1$ such that 
\[
c (\varpi^{l_i}   \ml(d_i) ) \subset c (\varpi^{l_{i-1}}   \ml(d_{i-1}) ) \subset \cdots \subset 
                                            c (\varpi^{l_1}  \ml(d_1) ) \subset c (\varpi^{l_0}   \ml).    
\]
Now if $a \in C_i$, then $a$ belongs to the open set $a c(\varpi^{l_i} \ml (d_i) ) \subset C = a c(\varpi^{l_0} \ml )$. Further, 
by (\ref{eqn}), $a c(\varpi^{l_i} \ml (d_i) ) = a c(\varpi^{l_i} \ml (x_a) )$ and thus $a c(\varpi^{l_i} \ml (d_i) )$ is conjugate-$\theta$-stable by 
Lemma~\ref{lemma2}~(2). 
In this way, we associate a conjugate-$\theta$-stable open neighborhood of $a$ to each $a \in C$.  By construction, any two of these neighborhoods are either disjoint
or nested (i.e., one is contained in the other).  As $C$ is compact, it can be covered by finitely many such neighborhoods. 
Taking the maximal elements (with respect to inclusion) in any such cover, we  obtain the desired
decomposition of $C$.  
\end{proof}

\section{Cayley maps}   \label{Cayley maps}
We introduce a Cayley map  for similitude groups and collect some of its properties. 
We use these and corresponding properties of the classical Cayley map to verify in  
\S\ref{verification} that the groups $\uv$ and $\guv$ satisfy  Hypothesis~(1) of \S\ref{hypotheses}. 
In this way, Cayley maps underpin our proof of Theorem~B. 
As noted above, our use of such maps means that we have to exclude the case of even residual characteristic. We note also that 
there are more refined treatments of the classical Cayley map in the literature. For example, 
Lemma \ref{Cayley-compact-open} (2) below is a special case of  \cite{Morris} Theorem 2.13 (c).  

\subsection{}
Let  $a \in \ev$. By non-degeneracy of $\vform$, there is a unique $a^\ast \in \ev$ such that 
\[
         \langle a v, v' \rangle =  \langle v, a^\ast v' \rangle,  \,\,\, \forall \,\, v, v' \in V.  
\]
The resulting map $a \mapsto a^\ast: \ev  \to  \ev$ is a $\tau$-linear anti-involution. Explicitly, for all $\lambda \in E$ and
$a, b \in \ev$, 
\begin{enumerate}[(1)]
\item
$(\lambda a)^\ast = \tau(\lambda) a^\ast$ and $(a+b)^\ast = a^\ast + b^\ast$;
\item
$(a^\ast)^\ast = a$ and $(ab)^\ast = b^\ast a^\ast$. 
\end{enumerate}
We use these properties without comment below. Note      
$a \in \uv$ if and only if $a \,a^\ast = 1$. Similarly $a \in \guv$ if and only if $a \,a^\ast = \beta$ for some scalar $\beta$
in which case $\mu(a) = \beta$. 

We write $\lieg$ and $\lietg$ for the Lie algebras of $\uv$ and $\guv$ respectively. Thus 
\begin{align*}
   &\lieg = \{ X \in \ev :  \langle X v, v' \rangle  + \langle  v, X v' \rangle  = 0,   \, \forall \,\, v, v' \in V\},  \\
   \lietg = \{ &X \in \ev :  \langle X v, v' \rangle  + \langle  v, X v' \rangle  = \alpha \, \langle v, v' \rangle,  
            \,\, \text{for some scalar $\alpha = \alpha(X)$}, 
                                                        \, \forall \,\, v, v' \in V\}.  
\end{align*}
That is, $X \in \lieg$ if and only if $X+X^\ast = 0$  and $X \in \lietg$ if and only if $X + X^\ast = \alpha(X)$.

\subsection{}
Consider the dense open subset of $\lietg$ given by 
\begin{equation} \label{domain}
\lietg^1 = \{ X \in \lietg : 1 + \alpha(X) \neq 0, \, \det(1+X) \neq 0 \}. 
\end{equation}
The similitude Cayley map $c$ is defined by 
\begin{equation} \label{sim-cayley}
X \overset{c}{\longmapsto} \left( 1 - \dfrac{X}{1+\alpha} \right) (1+X)^{-1}: \lietg^1 \to \guv
       \end{equation}
where $\alpha = \alpha(X)$.

To see that $c(X) \in \guv$, note that 
\begin{align*} 
    c(X) c(X) ^\ast &=  \left( 1 - \dfrac{X}{1+\alpha}\right) (1+X)^{-1} \left( 1 - \dfrac{X^\ast}{1+\alpha}\right) (1+X^\ast)^{-1}  \\
                            &=  \left( 1 - \dfrac{X}{1+\alpha}\right) (1+X)^{-1} \left( 1 - \dfrac{\alpha - X}{1+\alpha}\right) (1+\alpha - X)^{-1}  \\
                            &= \left( 1 - \dfrac{X}{1+\alpha}\right) (1+X)^{-1} ( 1 + X) \left(1- \dfrac{X}{1+\alpha}\right)^{-1} (1+\alpha)^{-2}   \\
                            &=   (1+\alpha)^{-2}.   
                            \end{align*} 
Thus $c(X)$ indeed belongs to $\guv$ and 
\begin{equation} \label{c(X)-multiplier}  
 \mu( c(X) ) = \dfrac{1}{(1+\alpha)^2}.
 \end{equation}

Let  
\[
\lieg^1 = \lietg^1 \cap \lieg = \{ X \in \lieg :  \det(1+X) \neq 0 \}.
\] 
Then (\ref{sim-cayley}) restricts to the classical Cayley map 
\[
X \overset{c}{\longmapsto} ( 1 - X) (1+X)^{-1}: \lieg^1 \to \uv.
\]

\subsection{}   \label{Cayley-fibers}
In contrast to its classical version, the similitude Cayley map is not injective. For later use, we describe its image and fibers (in more detail than we strictly require).

For $g \in  \guv$, we write $\mu = \mu(g)$.  For $X \in \lietg^1$, we set $\lambda = \dfrac{1}{1+\alpha}$ where $\alpha = \alpha(X)$, so that 
 \[
 c(X) = \dfrac{1-\lambda X}{1+X}.
 \]
For $\lambda \in F$ such that $\lambda + g$ is invertible, we also set 
\[
  X_\lambda = X_\lambda(g) =  \dfrac{1-g}{\lambda +g}.
  \]
Note that solving $c(X) = g$ for $X$ gives $X = X_\lambda$ provided $\lambda + g$ is invertible.  

\begin{Prop}
The image of the similitude  Cayley map {\rm (\ref{sim-cayley})} consists of all $g \in \guv$ such that 
{\rm (a)}
$\mu =  \lambda^2$ for some $\lambda \in F^\times$ and
{\rm (b)}
either $\mu \neq 1$ and at least one of $\pm \lambda +g$ is invertible or $\mu = 1$ and $1 + g$ is invertible.

Let $g \in \guv$ belong to this image with $\mu = \lambda^2$.   
\begin{enumerate}[{\rm(1)}]
\item
If $\mu \neq 1$ and $\lambda + g$ is invertible but not $-\lambda+g$, then $X_\lambda(g)$ is the unique preimage of $g$. 

\item
If $\mu \neq 1$ with $\pm \lambda + g$ both invertible, then $g$ has precisely two preimages, 
$X_\lambda(g)$ and $X_{-\lambda}(g)$.

\item
If $\mu = 1$ and $g \neq 1$ with $1+g$ invertible, then $X_1(g)$ is the unique preimage of $g$.

\item
The preimage of $1\in \guv$ is infinite. It consists of $0$ and the elements $X \in \lietg^1$ such that $\alpha(X) = -2$. 

\end{enumerate}

\end{Prop}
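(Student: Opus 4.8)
The statement has several parts: an identification of the image, and a description of the fibers in four cases. My plan is to work directly from the relation $c(X)=g$ and the algebraic identities already recorded in the excerpt, especially $\mu(c(X)) = (1+\alpha)^{-2}$ from \eqref{c(X)-multiplier} and the observation that solving $c(X)=g$ yields $X = X_\lambda(g)$ whenever $\lambda + g$ is invertible. The organizing idea is that a point $X \in \lietg^1$ is determined by the single scalar $\lambda = \frac{1}{1+\alpha}$ together with $g$, so classifying fibers amounts to classifying which admissible values of $\lambda$ can occur for a given $g$.

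\textbf{Necessity of the image conditions.} First I would show that any $g$ in the image satisfies (a) and (b). If $g = c(X)$, then by \eqref{c(X)-multiplier} we have $\mu = (1+\alpha)^{-2} = \lambda^2$ with $\lambda = \frac{1}{1+\alpha} \in F^\times$ (recall $X \in \lietg^1$ forces $1+\alpha \neq 0$), giving (a). For (b), rewriting $c(X) = g$ as $(1-\lambda X)=g(1+X)$ and solving for $X$ shows $(\lambda + g)X = 1 - g$, so $X = X_\lambda(g)$ provided $\lambda + g$ is invertible; conversely I must check that at least one of $\pm\lambda + g$ is actually invertible in the relevant case. The cleanest route is to observe that $\det(1+X)\neq 0$ translates, after substituting $X = \frac{1-g}{\lambda+g}$, into an invertibility statement about $\lambda + g$ or $-\lambda + g$ via a short determinant manipulation; I expect $(1+X) = \frac{(1+\lambda)+(1-\lambda)\cdot(\text{stuff})}{\lambda+g}$-type simplifications to land on the factor $-\lambda + g$.

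\textbf{Sufficiency and the fiber count.} For the converse direction and the fiber analysis, given $g$ satisfying (a),(b), the candidate preimages are exactly $X_{\lambda}(g)$ and $X_{-\lambda}(g)$ for the two square roots $\pm\lambda$ of $\mu$, and I would verify by direct substitution that each $X_{\pm\lambda}(g)$ (when $\pm\lambda+g$ is invertible) does lie in $\lietg$, satisfies $\alpha(X_{\pm\lambda}) = \frac{1}{\pm\lambda} - 1$ so that it lands in $\lietg^1$, and maps under $c$ back to $g$. The four cases then fall out by bookkeeping: in case (2) both $\pm\lambda+g$ are invertible and $\lambda\neq-\lambda$ (since $\mu\neq 1$ forces $\lambda\neq 0$ hence $2\lambda\neq 0$ in odd residual characteristic), giving exactly two distinct preimages; in case (1) only one of the two scalars yields an invertible $\lambda+g$, hence one preimage; in case (3), $\mu=1$ forces $\lambda = \pm 1$, and the two candidate scalars $\pm 1$ collapse because $X_1$ and $X_{-1}$ must be checked against $\lietg^1$ — here $-1+g = g-1$ is the relevant obstruction and $g\neq 1$ plus invertibility of $1+g$ pins down $X_1(g)$ as the unique preimage.

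\textbf{The infinite fiber over $1$ and the main obstacle.} For part (4), I set $g=1$ and ask for all $X\in\lietg^1$ with $c(X)=1$, i.e. $1-\lambda X = 1+X$, giving $(1+\lambda)X = 0$. Thus either $X = 0$, or $\lambda = -1$, i.e. $\frac{1}{1+\alpha} = -1$, i.e. $\alpha(X) = -2$; in the latter case any $X \in \lietg^1$ with $\alpha(X)=-2$ and $\det(1+X)\neq 0$ works, and this is an infinite set because the affine space $\{X : X+X^\ast = -2\}$ is positive-dimensional. The step I expect to be the main obstacle is the careful determinant/invertibility translation in part (b) and in distinguishing cases (1) versus (2): I must cleanly relate the two conditions $\det(1+X_\lambda(g))\neq 0$ and invertibility of $-\lambda+g$, and track how the two square roots $\pm\lambda$ interact with the two possible invertibility failures, making sure no preimage is double-counted and none is missed. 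Handling $\mu=1$ separately (where the two square roots coincide up to sign but the Cayley formula degenerates differently) is the delicate endpoint of this bookkeeping.
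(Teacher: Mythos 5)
Your proposal follows essentially the same route as the paper's proof: both rest on $\mu(c(X)) = (1+\alpha)^{-2}$, on solving $c(X)=g$ to get $X = X_\lambda(g)$, and on the computation $\alpha(X_\lambda) = \lambda^{-1}-1$ (so $X_\lambda$ lies in the domain exactly when $\lambda \neq -1$), with the fiber over $1$ handled exactly as you do. The one place your plan is slightly off is the predicted determinant manipulation: there is no need for a substitution (which would be circular) nor does $-\lambda+g$ appear — your own identity $(\lambda+g)X = 1-g$ gives $(\lambda+g)(1+X) = \lambda+1$, so invertibility of $1+X$ immediately forces $\lambda+g$ itself to be invertible whenever $\lambda \neq -1$, which is precisely the paper's two-line argument.
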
 

\begin{proof}
Suppose $g = c(X)$ for some $X \in \lietg^1$. We have $\mu = \lambda^2$ by (\ref{c(X)-multiplier}).  
Assume first that $\lambda \neq -1$, equivalently $\alpha \neq -2$. Then 
\begin{align*} 
 \lambda + g  &= \lambda + \dfrac{1-\lambda X}{1+X} \\
                      &= \dfrac{\lambda +1}{1+X} 
                      \end{align*}
and thus $\lambda +g$ is invertible. 
In the case $\lambda = -1$, we have $c(X) = 1$. It follows that the image of the similitude Cayley map is as stated. 

We can reverse this reasoning to determine the map's fibers. 
Indeed, suppose $g \in \guv$ and $\lambda^2 = \mu$. 
If $\lambda +g$ is invertible, then 
\begin{align*} 
   1 + X_\lambda  &=  1 + \dfrac{1-g}{\lambda + g} \\
                            &=   \dfrac{\lambda + 1}{\lambda +g}. 
                            \end{align*}
Hence $1+X_\lambda$ is invertible if and only if $\lambda \neq -1$. Moreover, using $g g^\ast = \lambda^2$,  
\begin{align*} 
  X_\lambda + X_\lambda^\ast  &= \dfrac{1-g}{\lambda + g}  + \dfrac{1-g^\ast}{\lambda + g^\ast}  \\
                                               &=  \dfrac{1-g}{\lambda + g}  +  \dfrac{1- \lambda^2 g^{-1} }{\lambda + \lambda^2 g^{-1}}  \\
                                               &=  \dfrac{1-g}{\lambda + g}  +  \dfrac{(\lambda^{-1}g - \lambda) \lambda g^{-1}}{(\lambda + g) \lambda g^{-1}}  \\
                                               &=  \dfrac{ 1-g + \lambda^{-1}g - \lambda}{\lambda +g} \\
                                               &=  \dfrac{(\lambda +g) (\lambda^{-1} - 1)}{\lambda + g} \\
                                               &= \lambda^{-1} - 1. 
                                                  \end{align*}
Thus $X_\lambda \in \lietg^1$ provided $\lambda \neq -1$ and $c(X_\lambda) = g$ (by construction of $X_\lambda$). 
Statements (1) through (4) all follow. 
\end{proof}

\subsection{}  \label{lattice-values}
Let $L$ be an $\eo$-lattice in $V$. 
(That is, $L$ is an $\eo$-submodule of $V$ such that $L \otimes_{\eo} E = V$.  Equivalently, $L$ is a compact open $\eo$-submodule of $V$.)
 For later purposes, we  assume also that $h(L) = L$ where $h$ is our fixed anti-unitary involution.  It is immediate that such lattices exist. Indeed, for any $\eo$-lattice $L_0$ in $V$, we may take  $L = L_0 \cap h(L_0)$.
We set 
 \[
 \mlh = \{ a \in \ev :  a(L) \subset L \}.
 \]
This is an $\eo$-order in $\ev$ (i.e., an $\eo$-lattice in $\ev$ that is also a subring) and hence also an $\fo$-order in $\ev$. We put 
 \begin{equation} \label{lie-alg-lattices}
  \mtl =  \mlh \cap \lietg, \quad \mttl = \mlh \cap \lieg.
  \end{equation}
It follows that $\mtl$ and $\mttl$ are $\fo$-lattices in $\lietg$ and $\lieg$ respectively.  


\begin{Lem}
Let $X \in \mtl$ and $g \in ( \, 1+ \varpi^k \mlh \,  ) \cap \guv$ for $k \geq 1$. Then 
\begin{enumerate}[{\rm (1)}]
\item
$\alpha(X) \in \fo$,
\item
$\mu(g) \in 1 + \fp^k$. 
\end{enumerate}
\end{Lem}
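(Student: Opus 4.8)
The plan is to prove the two statements directly from the definitions of $\alpha$ and $\mu$ together with the integrality of the lattice structures.

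\medskip

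For part (1), recall that $X \in \mtl = \mlh \cap \lietg$, so $X$ preserves the lattice $L$ and satisfies $X + X^\ast = \alpha(X)$, the scalar being interpreted as $\alpha(X) \cdot {\rm id}_V$. First I would observe that since $X(L) \subset L$, the adjoint $X^\ast$ also preserves $L$: indeed, for $v, v' \in L$ one has $\langle v, X^\ast v' \rangle = \langle X v, v' \rangle$, and one checks that $\mlh$ is stable under the anti-involution $\ast$ because $\ast$ is defined via the form $\vform$, which restricts to a perfect pairing on a suitable lattice. (More concretely, since $L$ is an $\eo$-lattice fixed by $h$, the dual lattice $L^\sharp = \{ v : \langle v, L\rangle \subset \eo\}$ is commensurable with $L$, and one argues that $X^\ast \in \mlh$ using that $\mlh$ is the ring of endomorphisms preserving $L$ and the symmetry of the form.) Granting $X, X^\ast \in \mlh$, we get $\alpha(X) = X + X^\ast \in \mlh$. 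But $\alpha(X)$ is a scalar in $E$, and a scalar lies in $\mlh$ precisely when it maps $L$ into $L$, i.e.\ when it lies in $\eo$. Since $\alpha(X) \in F$ (it equals $X + X^\ast$ which is fixed by $\tau$, as the trace-type expression is $\tau$-stable), we conclude $\alpha(X) \in \eo \cap F = \fo$, as claimed.

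\medskip

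For part (2), let $g \in (1 + \varpi^k \mlh) \cap \guv$, so $g = 1 + \varpi^k Y$ for some $Y \in \mlh$, and $g g^\ast = \mu(g)$. I would compute $g g^\ast = (1 + \varpi^k Y)(1 + \varpi^k Y^\ast) = 1 + \varpi^k(Y + Y^\ast) + \varpi^{2k} Y Y^\ast$. Since $Y \in \mlh$ and (by the same argument as above) $Y^\ast \in \mlh$, the element $Z := (Y + Y^\ast) + \varpi^k Y Y^\ast$ lies in $\mlh$, giving $\mu(g) = g g^\ast = 1 + \varpi^k Z$. As $\mu(g)$ is a scalar in $F^\times$, the difference $\mu(g) - 1 = \varpi^k Z$ is the scalar $(\mu(g)-1)\cdot{\rm id}_V$, which lies in $\varpi^k \mlh$; a scalar in $\mlh$ lies in $\eo$, so $\mu(g) - 1 \in \varpi^k \eo \cap F = \fp^k$. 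Hence $\mu(g) \in 1 + \fp^k$.

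\medskip

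The main obstacle I expect is the repeatedly used claim that $\mlh$ is stable under the adjoint $\ast$, i.e.\ that $a(L)\subset L$ implies $a^\ast(L)\subset L$. This is not automatic for an arbitrary lattice $L$: it holds when $L$ is self-dual with respect to the form, so the cleanest route is to choose (or arrange that) $L$ is self-dual, or to work with the dual lattice and argue that the integrality conclusions $\alpha(X)\in\fo$ and $\mu(g)-1\in\fp^k$ survive passing to a commensurable lattice. Since the statement only concerns the scalars $\alpha(X)$ and $\mu(g)$, and scalars lie in $\eo$ iff they preserve \emph{any} fixed $\eo$-lattice, I would resolve this by noting that even if $\ast$-stability of $\mlh$ fails, the elements $X^\ast$ and $Y^\ast$ still preserve \emph{some} fixed lattice (the dual lattice $L^\sharp$), and the scalar identities $\alpha(X) = X + X^\ast$ and $\mu(g)=gg^\ast$ then force the scalars into $\fo$ and $1+\fp^k$ by testing against $L \cap L^\sharp$. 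I would single this $\ast$-stability point out as the step warranting the most care.
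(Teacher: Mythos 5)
Your reduction of both statements to the claim that $\mlh$ is stable under the adjoint $a \mapsto a^\ast$ contains a genuine gap, and you have located it yourself but not repaired it. The map $a \mapsto a^\ast$ carries the stabilizer of $L$ onto the stabilizer of the dual lattice $L^\sharp$, and these two orders coincide only when $L^\sharp$ is a scalar multiple of $L$; the paper assumes only that $L$ is an $\eo$-lattice with $h(L) = L$, and such a lattice need not be self-dual up to homothety --- indeed for some forms no self-dual lattice exists at all (e.g.\ the one-dimensional form $\langle x, y \rangle = \varpi \, x \, \tau(y)$), so you cannot simply ``arrange'' self-duality, and you cannot change $L$ either, since $\mtl$ and $\mlh$ are defined from the given $L$. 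Worse, for part (1) the claim ``$X^\ast \in \mlh$'' is not a stepping stone toward the conclusion but is \emph{equivalent} to it: since $X^\ast = \alpha(X) - X$ and $X \in \mlh$, one has $X^\ast \in \mlh$ if and only if $\alpha(X) \in \eo$. (Even using $h(L)=L$ one only gets $X^\ast = \alpha(X) - hXh^{-1}$ conjugated back by $h$, which again preserves $L$ exactly when the scalar $\alpha(X)$ does.) So any honest proof of $\ast$-stability on these elements already proves the lemma, and your stated justification (``a perfect pairing on a suitable lattice'') is precisely the self-duality that is unavailable. Your fallback does not close the gap: from $X(L) \subset L$ and $X^\ast(L^\sharp) \subset L^\sharp$ you can only conclude $\alpha(X) \, (L \cap L^\sharp) = (X + X^\ast)(L \cap L^\sharp) \subset L + L^\sharp$, and a scalar mapping a lattice into a strictly larger commensurable lattice need not be integral --- you get only a bound $\alpha(X) \in \varpi^{-N}\fo$ where $\varpi^N (L + L^\sharp) \subset L \cap L^\sharp$. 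For part (2) the same discrepancy destroys the exponent: the argument would yield $\mu(g) - 1 \in \fp^{k-N}$ rather than $\fp^k$.

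The missing idea, which is how the paper proceeds, is to test the \emph{form} on lattice vectors rather than testing \emph{operators} on lattices; this sidesteps duality entirely. Let $\fl$ be the fractional ideal generated by the values $\langle u, v \rangle$ with $u, v \in L$. For (1), the defining identity $\langle Xu, v\rangle + \langle u, Xv \rangle = \alpha(X) \langle u, v \rangle$ together with $Xu, Xv \in L$ gives $\alpha(X) \, \fl \subset \fl$, whence $\alpha(X) \in \fo$ (it is a $\tau$-fixed scalar acting integrally on a nonzero finitely generated module). For (2), writing $g = 1 + \varpi^k Y$ with $Y \in \mlh$ and expanding $\langle gu, gv \rangle = \mu(g) \langle u, v \rangle$ puts every term of $(\mu(g) - 1)\langle u, v\rangle$ in $\fp^k \fl$, whence $(\mu(g)-1) \, \fl \subset \fp^k \fl$ and $\mu(g) \in 1 + \fp^k$. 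Your algebraic manipulations in part (2) (the expansion of $gg^\ast$) are fine, but they rest on the unprovable $Y^\ast \in \mlh$; the paper's device of the value ideal $\fl$ is what makes the argument work for an arbitrary $h$-stable lattice.
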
 

\begin{proof}
Let $\fl$ denote the fractional ideal of $F$ generated by the elements $\langle u, v \rangle$ for $u, v \in L$.  
For any such $u$ and $v$, we have $ \langle X u, v \rangle + \langle u, X v \rangle \in \fl$ where $\alpha = \alpha(X)$, whence  
$\alpha \langle u, v \rangle \in \fl$. It follows that $\alpha \fl \subset \fl$ and so $\alpha \in \fo$. 

Write $g = 1+ \varpi^k X$ for $X \in \mlh$. 
With $\beta = \mu(g)$, we then have 
\[
     \langle (1+\varpi^k X) u, (1+\varpi^k X) v \rangle = \beta \, \langle u, v \rangle, \quad \forall \,\, u, v \in L.
     \]
Expanding and rearranging gives
\[
\varpi^k \langle X u , v \rangle + \varpi^k \langle u, X v \rangle  = ( \beta - 1 ) \,  \langle u, v \rangle,  \quad \forall \,\, u, v \in L.
\]
Thus $ ( \beta - 1) \fl \subset \fp^k \fl$ and $\beta - 1 \in \fp^k$. 
\end{proof}

\subsection{}    \label{Cayley-compact-open}
 The family of compact open subgroups $\{ 1+ \varpi^k \mlh \}_{k \geq 1}$ is a neighborhood basis of the identity in $\av$. 
 Thus 
 $\{ \,( 1+ \varpi^k \mlh \,)  \cap \guv ) \}_{k \geq 1}$ 
 and 
  $\{ \,( 1+ \varpi^k \mlh \,)  \cap \uv ) \}_{k \geq 1}$ 
 form neighborhood bases of the identity in $\guv$ and $\uv$ (respectively) that consist again of compact open subgroups. 
 These families have a simple description in terms of the Cayley map. 
  
\begin{Lem} 
For any integer $k \geq 1$, 
\begin{enumerate}[{\rm (1)}]
\item
 $c(\varpi^k \mtl) =         ( \, 1+ \varpi^k \mlh \,  ) \cap \guv$, 
\item
$c(\varpi^k \mttl)      = ( \, 1+ \varpi^k \mlh \,  ) \cap \uv$.  
\end{enumerate}
Moreover, each restriction $c \mid \varpi^k \mtl$ and $c  \mid \varpi^k \mttl$ is a homeomorphism onto its image.  
\end{Lem}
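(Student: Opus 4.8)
The plan is to prove the two set equalities by a double inclusion argument, using the explicit formula for the Cayley map together with the previous lemma, and then to establish the homeomorphism claim by exhibiting a continuous inverse. First I would treat the similitude case (1), since the isometry case (2) follows by restriction (one checks $\alpha(X)=0$ forces $\mu(c(X))=1$, so $c$ maps $\varpi^k\mttl$ into $\uv$). Let me fix notation: for $X\in\varpi^k\mtl$ we write $\alpha=\alpha(X)$, and recall from the preceding lemma that $\alpha\in\fo$; since in fact $X\in\varpi^k\mtl$ we expect $\alpha\in\fp^k$, which guarantees $1+\alpha$ is a unit and $1+\alpha(X)\neq 0$, so the domain condition in (\ref{domain}) is satisfied and $c(X)$ is defined.

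For the inclusion $c(\varpi^k\mtl)\subseteq(1+\varpi^k\mlh)\cap\guv$, I would expand the similitude Cayley formula
\[
c(X)=\Bigl(1-\frac{X}{1+\alpha}\Bigr)(1+X)^{-1}
\]
and show $c(X)-1\in\varpi^k\mlh$. The key point is that $(1+X)^{-1}=\sum_{j\ge0}(-X)^j$ converges in $\mlh$ because $X\in\varpi^k\mlh$ with $k\ge1$, and each term beyond the constant lies in $\varpi^k\mlh$; combined with $\frac{X}{1+\alpha}\in\varpi^k\mlh$ (using $1+\alpha$ a unit in $\fo$), a direct manipulation shows $c(X)\equiv 1\pmod{\varpi^k\mlh}$. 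For the reverse inclusion, I would take $g\in(1+\varpi^k\mlh)\cap\guv$, so $g=1+\varpi^k Y$ with $Y\in\mlh$, and by part (2) of the preceding lemma $\mu(g)=\lambda^2$ with $\lambda\in 1+\fp^k$; I would then produce the preimage $X=X_\lambda(g)=\frac{1-g}{\lambda+g}$ from the fiber analysis of \S\ref{Cayley-fibers}, and verify $X\in\varpi^k\mtl$, i.e. $X\in\varpi^k\mlh$. Here $\lambda+g=(\lambda-1)+(1+g)$ where $1+g=2+\varpi^k Y$ is a unit in $\mlh$ (using odd residual characteristic so $2$ is a unit), and $1-g=-\varpi^k Y$, so $X_\lambda=\frac{-\varpi^k Y+(\lambda-1)}{\lambda+g}$ manifestly lies in $\varpi^k\mlh$ once one checks $\lambda-1\in\fp^k$ and $(\lambda+g)^{-1}\in\mlh$.

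Finally, for the homeomorphism statement, continuity of $c$ is clear from its rational formula on the domain where $1+\alpha$ and $1+X$ are invertible. The inverse is given on the image by $g\mapsto X_\lambda(g)$, with $\lambda$ the square root of $\mu(g)$ lying in $1+\fp^k$; this too is continuous as a rational function (the map $\mu$ is continuous and extracting the square root near $1$ is continuous in the non-archimedean setting by Hensel's lemma, again using odd residual characteristic). Since both $\varpi^k\mtl$ and its image are compact, a continuous bijection with continuous inverse is a homeomorphism, and compactness even makes the two-sided continuity check redundant once injectivity on $\varpi^k\mtl$ is established.

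The main obstacle I anticipate is the selection of the correct square root $\lambda$ of $\mu(g)$ in the reverse inclusion and the inverse map: the similitude Cayley map is not injective (by \S\ref{Cayley-fibers} a generic $g$ has the two preimages $X_{\pm\lambda}$), so I must verify that exactly one of these, namely the one with $\lambda\equiv 1\pmod{\fp^k}$, lands in $\varpi^k\mlh$, while $X_{-\lambda}$ does not. Checking that $-\lambda+g$ fails to be a unit mod $\varpi^k$ — equivalently that $X_{-\lambda}\notin\varpi^k\mlh$ — is where the odd-residual-characteristic hypothesis is genuinely used (to distinguish $\lambda$ from $-\lambda$ modulo $\varpi$), and it is the crux that pins down the bijection and hence the homeomorphism.
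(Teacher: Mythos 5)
Your proposal is correct and follows essentially the same route as the paper's proof: the forward inclusion by direct computation using $\alpha(X) \in \fp^k$ and the unit $1+\alpha$, the reverse inclusion via the unique square root $\lambda \in 1+\fp^k$ of $\mu(g)$ and the fiber formula $X_\lambda(g)$, part (2) by specializing to $\lambda = 1$, $\alpha = 0$, and the homeomorphism claim from a continuous bijection on a compact set with injectivity supplied by the fiber analysis of \S\ref{Cayley-fibers}. Two small repairs: the numerator of $X_\lambda(g) = \frac{1-g}{\lambda+g}$ is simply $1-g = -\varpi^k Y$ (your extra $\lambda - 1$ term is spurious but harmless), and the cleanest way to exclude $X_{-\lambda}$ (and likewise the extra preimages of $1$) is not invertibility of $-\lambda + g$ modulo $\varpi^k$ but the computation $\alpha(X_{-\lambda}) = -\lambda^{-1}-1 \equiv -2 \pmod{\fp^k}$, which cannot lie in $\fp^k$ since the residual characteristic is odd, so $X_{-\lambda} \notin \varpi^k \mtl$ by Lemma~\ref{lattice-values}~(1).
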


\begin{proof}
Suppose $X \in \varpi^k \mtl$. By Lemma~\ref{lattice-values}~(1),
 $\alpha = \alpha(X) \in \fp^k$ and thus $(1+\alpha)^{-1}  \in 1+\fp^k$. 
It follows that 
\[
c(X) = \left( 1 - \dfrac{X}{1+\alpha}\right) (1+X)^{-1}   \,\, \in \,\, 1+\varpi^k \mlh.
\]
Hence $c(\varpi^k \mtl) \subset ( \, 1+ \varpi^k \mlh \,  ) \cap \guv$. 
Taking $\alpha = 0$, we see also that $c(\varpi^k \mttl) \subset ( \, 1+ \varpi^k \mlh \,  ) \cap \uv$.

To prove the opposite containments, let $g \in  ( \, 1+ \varpi^k \mlh \,  ) \cap \guv$ and set $\mu = \mu(g)$. 
Then $\mu \in 1+\fp^k$ by Lemma~\ref{lattice-values}~(2).
Since the residual 
characteristic is odd, there is a unique $\lambda \in 1+\fp^k$ such that $\mu = \lambda^2$.  Writing
$g= 1+\varpi^k X$ for $X \in \mlh$, we have 
\begin{align*}
\lambda + g  &= 1 + \lambda + \varpi^k X   \\  &=  (1+ \lambda) \left( 1 + \dfrac{\varpi^k X}{1+\lambda} \right).
\end{align*}
Note $1+\lambda \in \fo^\times$ (again since the residual characteristic is odd). 
Thus $1 + \dfrac{\varpi^k X}{1+\lambda}  \in  1 + \varpi^k \mlh$. In particular,  $\lambda + g$ is invertible. 
By Proposition~\ref{Cayley-fibers}, $c(X_\lambda) = g$ where 
\[
     X_\lambda  = -     \dfrac{\varpi^k X}{1+\lambda} \,  \left( 1 + \dfrac{\varpi^k X}{1+\lambda} \right)^{-1}. 
\]
It follows that $X_\lambda \in \varpi^k \mtl$. This proves~(1). To complete the proof of (2),  we have only to note that $\mu = 1$ implies
$\lambda = 1$ in which case $X_1 \in  \varpi^k \mtl \cap \lieg = \varpi^k \mttl$. 

Finally, each restriction $c \mid \varpi^k \mtl$ and $c  \mid \varpi^k \mttl$ 
is a continuous map on a compact space. Further, by Proposition~\ref{Cayley-fibers},  each map is a bijection, and thus
each is a homeomorphism.    
\end{proof}


\section{Proof of Theorem~B} \label{verification}
Recall that $h \in {\rm Aut}_F(V)$ is an  anti-unitary involution and that $\ui g = \mu(g)^{-1} hgh^{-1}$ for $g \in \guv$. 
Thus 
\[
\ut g = \mu(g) h g^{-1} h^{-1}, \quad g \in \guv.
\]
The differential of $\theta$ is an involutary anti-automorphism of $\lietg$ which for simplicity we usually denote by the same symbol. 
It is given explicitly by 
\begin{equation}   \label{explicit-theta}
 {}^{\theta} X =  \alpha(X) - hXh^{-1}, \quad \,\, X \in \lietg. 
\end{equation} 

The map $\iota$ restricts to an automorphism of $\uv$ which we again denote by $\iota$.
Thus $\ui g = hgh^{-1}$ for $g \in \uv$. 

We restate Theorem~B. 

\begin{Thm}
The maps   $\iota:\uv \to \uv$ and  $\iota:\guv \to \guv$ are dualizing involutions. 
\end{Thm}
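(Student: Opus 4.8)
The plan is to deduce Theorem~B from the framework developed in \S\ref{hypotheses}: it suffices to equip each of $\uv$ and $\guv$ with the data $(\ml, \lag_1, c)$ of Hypothesis~(1) and to check Hypotheses~(1) and~(2). Hypothesis~(2) is precisely the Corollary to Theorem~A, so all of the remaining content is Hypothesis~(1). For $\guv$ I would take $\lag = \lietg$, $\ml = \mtl$, $\lag_1 = \lietg^1$ as in (\ref{domain}), and $c$ the similitude Cayley map (\ref{sim-cayley}); for $\uv$ I would take $\lag = \lieg$, $\ml = \mttl$, $\lag_1 = \lieg^1$, and $c$ the classical Cayley map. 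With these choices part~(1)(d) is exactly Lemma~\ref{Cayley-compact-open}. The task thus reduces to verifying parts~(a), (b) and~(c); I would carry out the similitude case in full, the unitary case following on specializing to $\alpha = 0$.

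Parts~(c) and~(b) are routine. For~(c): since $h(L) = L$ we have $h\mlh h^{-1} = \mlh$, and $\alpha(X) \in \fo$ for $X \in \mtl$ by Lemma~\ref{lattice-values}~(1), so the explicit formula $\ut X = \alpha(X) - hXh^{-1}$ of~(\ref{explicit-theta}) gives $\ut \mtl = \mtl$; the inclusion $\varpi\mtl \subset \lietg^1$ holds because $\alpha(\varpi X) = \varpi\,\alpha(X) \in \fp$ makes $1 + \alpha(\varpi X)$ a unit while $1 + \varpi X \in 1 + \varpi\mlh$ is invertible. For~(b): if $x \in \guv$ then $x^\ast = \mu(x)\,x^{-1}$, whence $(xXx^{-1})^\ast = xX^\ast x^{-1}$ and $\alpha(xXx^{-1}) = \alpha(X)$; conjugation also fixes $\det(1+X)$, so it stabilizes $\lietg^1$, and since $c$ is built from $1$, $X$ and the conjugation-invariant scalar $\alpha$ by rational operations, transporting each factor through ${\rm Int}(x)$ yields ${\rm Int}(x)\circ c = c\circ{\rm Ad}(x)$.

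The substance of the argument is part~(a). The algebraic input I would establish first is that conjugation by $h$ commutes with the adjoint, namely $(hXh^{-1})^\ast = hX^\ast h^{-1}$ for $X \in \ev$; this comes out of anti-unitarity of $h$ by moving the two factors of $h$ across the form. Granting it, $\ut X = \alpha(X) - hXh^{-1}$ has adjoint $\alpha(X) - hX^\ast h^{-1}$, so that $\alpha(\ut X) = 2\alpha(X) - h(X + X^\ast)h^{-1} = \alpha(X)$ (using $X + X^\ast = \alpha(X)$ and that this $\tau$-fixed scalar survives conjugation by $h$), and moreover $1 + \ut X = h(1+X)^\ast h^{-1}$. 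As conjugation by $h$ and the adjoint both preserve invertibility, the two defining conditions of $\lietg^1$ are preserved, giving $\ut \lietg^1 = \lietg^1$.

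What remains, and what I expect to be the main obstacle, is the intertwining $\theta_G\circ c = c\circ\theta_\lag$, i.e.\ $\ut c(X) = c(\ut X)$; this is the one genuinely computational point. The plan is to expand the left-hand side using $\ut g = \mu(g)\,hg^{-1}h^{-1}$ and the value $\mu(c(X)) = (1+\alpha)^{-2}$ from~(\ref{c(X)-multiplier}), together with $c(X)^{-1} = (1+X)\bigl(1 - \tfrac{X}{1+\alpha}\bigr)^{-1}$; and to expand the right-hand side from the formula~(\ref{sim-cayley}) for $c(\ut X)$ using $\alpha(\ut X) = \alpha(X)$ and $\ut X = \alpha - hXh^{-1}$. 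Applying $(hXh^{-1})^\ast = hX^\ast h^{-1}$ and the relation $X^\ast = \alpha - X$ throughout, both sides should collapse to the same rational expression in $hXh^{-1}$ and $\alpha$, conjugated by $h$; the care needed lies in keeping the scalar $\alpha$ and the multiplier factor correctly tracked through the inversions. With~(a) in hand, Hypothesis~(1) holds and Theorem~B follows from the framework of \S\ref{hypotheses}.
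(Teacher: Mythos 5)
Your proposal follows the same route as the paper: reduce Theorem~B to the framework of \S\ref{hypotheses}, note that Hypothesis~(2) is the Corollary to Theorem~A, and verify Hypothesis~(1) for the (similitude) Cayley map with $\ml = \mtl$, resp.\ $\mttl$, treating $\uv$ by specializing to $\alpha = 0$. Your verifications of (b), (c), (d) are correct and match the paper's. One deviation is worth recording: the paper does \emph{not} take $\lag_1 = \lietg^1$, but instead builds a third condition $\det(1+\alpha(X)-X) \neq 0$ into the definition of $\lag_1$, precisely so that $\theta$-stability can be read off from $\det(1+\ut X) = \tau(\det(1+\alpha-X))$ and $\det(1+\alpha-\ut X) = \tau(\det(1+X))$. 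Your observation that $1+\ut X = h(1+X)^{\ast}h^{-1}$, combined with the fact that the adjoint and conjugation by $h$ preserve invertibility, shows that this extra condition is automatic for $X \in \lietg$ (indeed $\det(1+\alpha(X)-X) = \tau(\det(1+X))$), so your $\lag_1$ coincides with the paper's and your stability argument is a slight streamlining of theirs.

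The one deficiency is that you stop short of proving the identity $\ut c(X) = c(\ut X)$, which you yourself single out as the crux of (1)(a): ``both sides should collapse to the same rational expression'' is a plan, not a proof, and this intertwining is the whole reason the similitude Cayley map is worth introducing. Fortunately your plan does go through, in a few lines, exactly with the ingredients you list. Put $Y = hXh^{-1}$ and $\alpha = \alpha(X)$, so that $\ut X = \alpha - Y$ and, as you showed, $\alpha(\ut X) = \alpha$; note also that $\alpha \in F$, so $h$ passes through the scalar $(1+\alpha)^{-1}$, and that $c(X)^{-1} = \bigl(1-\tfrac{X}{1+\alpha}\bigr)^{-1}(1+X)$ since the two factors, being rational in $X$, commute. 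Then
\begin{align*}
\ut c(X) &= \frac{1}{(1+\alpha)^2}\, h\, c(X)^{-1}\, h^{-1}
= \frac{1}{(1+\alpha)^2}\left(1 - \frac{Y}{1+\alpha}\right)^{-1}(1+Y)\\
&= \frac{1}{1+\alpha}\,(1+\alpha-Y)^{-1}(1+Y)
= (1+\ut X)^{-1}\,\frac{(1+\alpha)-(\alpha-Y)}{1+\alpha}\\
&= (1+\ut X)^{-1}\left(1 - \frac{\ut X}{1+\alpha}\right) = c(\ut X),
\end{align*}
where the final equality uses $\alpha(\ut X)=\alpha$ and, once more, that rational expressions in $\ut X$ commute. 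This is, up to ordering of commuting factors, the computation the paper performs. With this display inserted (and its $\alpha = 0$ specialization for $\uv$), your proof is complete and agrees in substance with the paper's.
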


To complete the proof, we only have to check Hypothesis~(1) in \S\ref{hypotheses}. 
 
\subsection{}

We first restate the four parts of Hypothesis~(1) and then verify each part in turn for the similitude groups.
Let $G = \guv$ and $\lag = \lietg$. 
\vspace{8pt}

\begin{enumerate}[{\bf (1)}]
\item
There is an  $\fo$-lattice $\ml \subset \lag$ and a map $c: \lag_1 \to G$ for $\lag_1 \subset \lag$ such that the following hold.
\begin{enumerate}[{\rm (a)}]
\item
$\ut \lag_1 = \lag_1$ and $\theta_G \circ c = c \circ \theta_\lag$.
\smallskip 
\item
${\rm Ad} (x)  \lag_1 = \lag_1$  and 
${\rm Int} (x)  c(X)  = c ({\rm Ad} (x) X)$ for all $x \in G$ and $X \in \lag$.
\smallskip 
\item
$\ut \ml = \ml$ and $\varpi \ml \subset \lag_1$;
\smallskip 
\item
For each $k \geq 1$, the restriction $c \mid \varpi^k \ml$ is a homeomorphism onto a compact open subgroup of $G$. In particular,  
the family $\{c (\varpi^k \ml) \}_{k \geq 1}$ consists of compact open subgroups and forms a neighborhood basis of the identity in $G$. 
\end{enumerate}
\end{enumerate}

We put
\[
\lag_1 = \{ X \in \lag: 1+ \alpha(X)  \neq 0, \,\,\det(1+X) \neq 0, \,\, \det( 1 +\alpha(X)  - X) \neq 0 \}.
\]
Note that $\lag_1$ is contained in the domain~(\ref{domain}) of the similitude Cayley map~(\ref{sim-cayley}). 
We take $c:\lag_1 \to G$ to be the restriction of this map to $\lag_1$ and put  $\ml = \mtl$ (see  (\ref{lie-alg-lattices})).
 
 
\vskip4pt
\noindent \underline{(a)} \quad
To show that $\ut \lag_1 = \lag_1$, it suffices to prove $\ut X \in \lag_1$ for $X \in \lag_1$.  

To this end, we check first that 
\begin{equation} \label{alpha-theta}
  \alpha(\ut X) = \alpha(X), \quad X \in \lag.
\end{equation}
We have $X^\ast = \alpha - X$ with $\alpha = \alpha (X)$.  Hence 
\begin{align*}
(\ut X)^\ast      &=  (\alpha - hXh^{-1})^\ast \\
                        &= \alpha  -  (hXh^{-1})^\ast. 
                        \end{align*}
                        Using that $h$ is anti-unitary with $h^2 = 1$, a quick calculation shows that $(hXh^{-1})^\ast = h X^\ast h^{-1}$. Thus 
\begin{align*}
(\ut X)^\ast      &=  \alpha - h X^\ast h^{-1} \\
                        &=  \alpha - h (\alpha - X)h^{-1} \\
                        &= h Xh^{-1} \\
                        &= \alpha - \ut X
                        \end{align*} 
which proves (\ref{alpha-theta}).

For any $X \in \lag$, 
\begin{align*} 
   \det (1 + \ut X)   &=  \det (1 + \alpha - h Xh^{-1} ) \\
                             &=  \det h (1+ \alpha - X) h^{-1} \\
                             &= \tau ( \det (1 + \alpha - X) ).
                             \end{align*}
 Similarly, 
 \begin{align*} 
 \det (1 + \alpha - \ut X ) &= \det ( 1 + h X h^{-1} ) \\
                            &=  \det h (1+  X) h^{-1} \\
                             &= \tau ( \det (1 + X) ).
                             \end{align*}               
Thus $\ut X \in \lag_1$ for $X \in \lag_1$ and  $\ut \lag_1 =  \lag_1$. 

Further for any $X \in \lag_1$, 
\begin{align*} 
    \ut c(X)     &=  \dfrac{1}{(1+\alpha)^2}  \,  h  \,  c(X)^{-1} h^{-1} \\
                    &=  \dfrac{1}{(1+\alpha)^2} \,   \left( \, 1 - \dfrac{hXh^{-1}}{1+\alpha}\,\right)^{-1} (1+hXh^{-1})  \\
                    &=  \dfrac{1}{(1+\alpha)^2}   \,  \left( \dfrac{1 + \alpha - hXh^{-1}}{1+\alpha} \right)^{-1} (1+hXh^{-1})  \\
                    &=  \dfrac{1}{(1+\alpha)}  \,    ( 1 + \ut X )^{-1} (1+hXh^{-1}) \\
                    &=  ( 1 + \ut X )^{-1} \, \left(\, \dfrac{1+\alpha - (\alpha - hXh^{-1})}{1+\alpha}\,\right) \\
                    &=  ( 1 + \ut X )^{-1} \, \left(1 - \dfrac{\ut X}{1+\alpha}\right) \\
                    &=   c ( \ut X).  
                     \end{align*}

\vskip4pt
\noindent \underline{(b)} \quad Let $x \in G$ with $x x^\ast = \beta$ and $X \in \lag$. We have ${\rm Ad}(x) X = x X x^{-1}$ and 
\begin{align*} 
         (x X x^{-1})^\ast    &=  (x^{-1})^\ast X^\ast x^\ast  \\
                                      &=  \beta^{-1} x   X^\ast  \beta x^{-1} \\
                                      &=      x X^\ast x^{-1} \\
                                      &=    x (\alpha - X) x^{-1} \\
                                      &=     \alpha - xXx^{-1}.
                                      \end{align*}
Thus  $\alpha({\rm Ad}(x) X) = \alpha(X)$.  It follows easily that ${\rm Ad}(x) \lag_1 = \lag_1$ and that ${\rm Int}(x) c(X) = c({\rm Ad}(x) X)$.

\vskip4pt
\noindent \underline{(c)} \quad We show first that $\ut \ml \subset \ml$ which implies $\ut \ml = \ml$. 

Let $X \in \ml$. By Lemma~\ref{lattice-values}~(1), $\alpha = \alpha(X) \in \fo$.  
As the $\eo$-lattice $L \subset V$ was chosen so that $h(L) = L$, it follows that $\ut X = \alpha - h Xh^{-1}$ preserves $L$ 
and so $\ut X \in \ml$. 

We have $\det (1 + \varpi X) \in 1 + \fp$. As $\alpha(\varpi X) \in \fp$, it follows also  also that $\alpha(\varpi X) \neq -1$ and 
$\det (1 + \alpha(\varpi X) - \varpi X) \in 1+ \fp$. In particular, $\varpi \ml \subset \lag_1$.

\vskip4pt
\noindent \underline{(d)} \quad This follows immediately from Lemma~\ref{Cayley-compact-open}.

\subsection{}
Next we verify Hypothesis~(1) for the classical groups $\uv$. 
We continue with the notation of the preceding subsection. In particular, 
$G = \guv$ and $\lag = \lietg$. We set  $G' = \uv$ and $\lag' = \lieg$.
We also put $\lag_1' = \lag_1 \cap \lag'$ and $\ml' = \ml \cap \lag'$
so that $\ml' = \mttl$ in the notation of (\ref{lie-alg-lattices}). We take $c:\lag_1' \to G'$ to be the classical Cayley map restricted to $\lag_1'$.
We have $\ut g = hg^{-1}h^{-1}$ for $g \in G'$. The induced map on $\lag'$ which we again denote by $\theta$ is 
\[
       \ut X = - h X h^{-1}, \quad X \in \lag'.
       \]

We need to show that (a)-(d) hold in this (primed) setting. For (a)-(c), the verifications of the preceding subsection all go through using 
$\alpha(X)=0$ for $X \in \lag'$.   Finally, for each $k \geq 1$, Lemma~\ref{Cayley-compact-open} gives that $c \mid \varpi^k \ml'$ is a homeomorphism onto a compact open subgroup of $G'$ and so (d) holds. 



\begin{thebibliography}{99}



\bibitem{Ad-Kor}
 J. Adler and J. Korman, {\it  The local character expansion near a tame, semisimple element}. Amer. J. Math. 129 (2007), no. 2, 381-403.

\bibitem{BZ}
 J. Bernstein and A. Zelevinsky, {\it Representations of the group $\mathrm{GL}(n,F)$ where $F$ is a non-Archimedean local field.} Russian Math. Surveys 31:3 (1976), 1-68.

 \bibitem{GK}
    I.~M. Gelfand and D.~A. Kazhdan, {\em Representations of the group 
   $GL(n,K)$ where $K$ is a local field.} Lie groups and their 
    representations (Proc. Summer School, Bolyai J\'{a}nos Math. Soc.,
     Budapest, 1971) pp. 95-118. Halsted, New York, 1975.             


\bibitem{H-C}
Harish-Chandra, {\it A submersion principle and its applications.} Geometry and Analysis: Papers Dedicated to the Memory of 
V. K. Patodi, Indian Academy of Sciences, Bangalore, 1980, pp. 95-102.

\bibitem{Jac}
 H. Jacquet, {\it Sur les repr\'{e}sentations des groupes r\'{e}ductifs $p$-adiques.}
 C. R. Acad. Sci. Paris S\'{e}r. A-B 280 (1975), Aii, A1271-A1272.
  
\bibitem{Kap}
 I. Kaplansky, {\it Linear Algebra and Geometry: a second course.}  2nd. edition. Chelsea Publishing Company, New York, NY, 1974.


\bibitem{MoViWa87}
C. M{\oe}glin, M.-F. Vign{\'e}ras, and J.-L. Waldspurger, {\it Correspondences de Howe sur un corps $p$-adique.}
Lecture Notes in Mathematics, 1291. Springer-Verlag, Berlin, 1987.

\bibitem{Morris}
L. Morris, {\it Tamely ramified supercuspidal representations of classical groups. I. Filtrations.}  Ann. Sci. {\'E}cole Norm. Sup. (4) 24 (1991), no. 6, 705-738.

  
 \bibitem{RV} 
A. Roche and C.~R. Vinroot, {\em A factorization result for classical and similitude groups.} Preprint, {\tt arXiv:1607.06647}, 2016. 

\bibitem{Tupan}
A. Tupan, {\it A triangulation of ${\rm GL}(n, F )$.} Represent. Theory 10 (2006), 158-163.

   


\end{thebibliography}
\end{document}